\documentclass[11pt]{article}

\usepackage{mathtools}
\usepackage{amsmath}
\usepackage{amsthm}
\usepackage{amsfonts}
\usepackage{amssymb}
\usepackage{latexsym}
\usepackage{mathrsfs}

\usepackage{tabularx}
\usepackage{diagbox}

\newcommand{\vt} { {\bf t}}
\newcommand{\vk} { {\bf k}}
\newcommand{\vv} { {\bf v}}
\newcommand{\vx} { {\bf x}}

\DeclareMathOperator{\DR}{DR}
\DeclareMathOperator{\ar}{arc}

\usepackage[noautoscale]{youngtab}

\usepackage{fullpage}

\usepackage{color, colortbl}
\usepackage{array}
\usepackage{multirow}

\usepackage{tikz}
\usepackage{subcaption}

\usepackage{dutchcal}

\usepackage{enumitem}
\setlist[enumerate, 1]{label=(\roman*)}

\numberwithin{equation}{section}

\newtheorem{thm}{Theorem}[section]
\newtheorem{lem}[thm]{Lemma}
\newtheorem{prop}[thm]{Proposition}
\newtheorem{cor}[thm]{Corollary}

\newtheorem{rem}[thm]{Remark}

\newtheorem{exa}[thm]{Example}

\newcommand{\cred}{\color{red}}
\newcommand{\cblue}{\color{blue}}

\def\bN{\mathbb N}

\def\bP{\mathbb P}

\def\cR{\mathcal{R}}
\def\cW{\mathcal{W}}

\def\cP{\mathcal{P}}

\def\oQ{\overline{Q}}

\newcommand{\cp}{\mathcal{p}}
\newcommand{\ccr}{\mathcal{r}}
\newcommand{\cc}{\mathcal{c}}

\usepackage[colorlinks=true,
linkcolor=webgreen,
filecolor=webbrown,
citecolor=webgreen]{hyperref}

\definecolor{webgreen}{rgb}{0,.5,0}
\definecolor{webbrown}{rgb}{.6,0,0}

\usepackage[symbol]{footmisc}

\begin{document}

\title{Decreasing Runs in Quasi-Stirling Permutations of Multisets\thanks{Hanqian Fang was partially supported by the Strategic Priority Research Program of the Chinese Academy of Sciences (No. XDB0510201) and the NSFC grant (No. 12271511).
}}

\author{Hanqian Fang\footnote{KLMM, Academy of Mathematics and Systems Science, Chinese Academy of Sciences, Beijing 100190, P. R. China.  Email: fanghanqian22@mails.ucas.ac.cn.}}

\date{\today}

\maketitle

\noindent\textbf{Abstract.} As a natural extension of Stirling permutations, quasi-Stirling permutations are multipermutations $\pi$ with the property that for any subsequence $\pi_{j_1}\pi_{j_2}\pi_{j_3}\pi_{j_4}$ satisfying $\pi_{j_1}=\pi_{j_3}$ and $\pi_{j_2}=\pi_{j_4}$, we have $\pi_{j_1}=\pi_{j_2}$. Using a bijective construction, Yan, Yang, Huang and Zhu showed that the joint distribution of ascents, descents and plateaux over quasi-Stirling permutations of a multiset $M=\{1^{k_1},2^{k_2},\ldots,n^{k_n}\}$ coincides with that over the multiset $M'=\{1^{k_1+\cdots+k_n-n+1},2,\ldots,n\}$. In this paper, we prove that the same invariance of distribution holds for decreasing runs, and consequently for all decreasing consecutive patterns. To this end, following the Yan-Yang-Huang-Zhu approach, we construct a multiplicity-redistribution bijection that preserves decreasing runs, thereby reducing the computation of joint distribution of decreasing consecutive patterns over quasi-Stirling permutations from $M$ to $M'$. Together with the classical run theorem, our bijection leads to explicit recurrence relations and generating functions for the distribution functions of these statistics over quasi-Stirling permutations.

\qquad\\

\noindent {\bf AMS Classification 2010:} 05A05; 05A15; 05A19

\noindent {\bf Keywords:} Quasi-Stirling permutation, decreasing run, consecutive pattern, bijection, multiset

\tableofcontents


\newpage

\section{Introduction}

A \textit{Stirling permutation} of a multiset $M=\{1^{k_1},2^{k_2},\ldots,n^{k_n}\}$ is a multipermutation $\pi$ such that for every triple of indices $j_1<j_2<j_3$, the equality $\pi_{j_1}=\pi_{j_3}$ implies $\pi_{j_2}\ge \pi_{j_1}$. Introduced by Gessel and Stanley~\cite{Gessel1978} in 1978 for the case $k_1=k_2=\cdots=k_n=2$, these \textit{classical Stirling permutations} were later shown by Janson~\cite{Janson2008} to be in bijection with increasing edge-labeled plane trees. Removing the increasing labeling condition extends this bijection to edge-labeled plane trees and a broader class of permutations, called quasi-Stirling permutations by Archer, Gregory, Pennington, and Slayden~\cite{Archer2019-1}. In general, a multipermutation $\pi$ of $M$ is called a \textit{quasi-Stirling permutation} if there do not exist indices $j_1<j_2<j_3<j_4$ such that $\pi_{j_1}=\pi_{j_3}$ and $\pi_{j_2}=\pi_{j_4}$ while $\pi_{j_1}\ne\pi_{j_2}$. When $k_1=k_2=\cdots=k_n=2$, these are precisely the \emph{classical quasi-Stirling permutations} studied in~\cite{Archer2019-1}. An alternative perspective is to view $\pi$ as a labeled matching on $[K]$, where $K=k_1+k_2+\cdots+k_n$. For each $\ell\in[n]$, we connect every pair of indices $j_1<j_2$ satisfying $\pi_{j_1}=\pi_{j_2}=\ell$ by an arc, which we denote by $\ar(\ell;j_1,j_2)$ (see~\cite[Exercise~6.19(o)]{StanleyBook2} for the classical case). Under this interpretation, $\pi$ is a quasi-Stirling permutation if and only if any two arcs with distinct labels are noncrossing. Let $\oQ_M$ denote the set of quasi-Stirling permutations of $M$. For example, if $M=\{1^2,2^3\}$, then $\pi=21221$ is not a quasi-Stirling permutation, as illustrated in Figure~\ref{FIG:labeled matching}. Indeed, $\oQ_M=\{12221,22112,21122,11222,22211\}$.

\begin{figure}[htbp]
	\centering
	\begin{subfigure}[b]{0.45\textwidth}
		\centering
		\begin{tikzpicture}[
		every node/.style={font=\sffamily},
		arc/.style={thick}
		]
		\foreach \x in {1,...,5} {
			\node[circle, fill=black, inner sep=1pt, minimum size=2mm] (p\x) at (\x, 0) {};
		}
		\node[below] at (1,-0.3) {$2$};
		\node[below] at (2,-0.3) {$1$};
		\node[below] at (3,-0.3) {$2$};
		\node[below] at (4,-0.3) {$2$};
		\node[below] at (5,-0.3) {$1$};
		\draw[arc, red, bend left=30] (p2.north) to (p5.north);
		\draw[arc, blue, bend left=30] (p1.north) to (p3.north);
		\draw[arc, blue, bend left=30] (p3.north) to (p4.north);
		\draw[arc, blue, bend left=30] (p1.north) to (p4.north);
		\end{tikzpicture}
	\end{subfigure}
	\hfill
	\begin{subfigure}[b]{0.45\textwidth}
		\centering
		\begin{tikzpicture}[
		every node/.style={font=\sffamily},
		arc/.style={thick}
		]
		\foreach \x in {1,...,5} {
			\node[circle, fill=black, inner sep=1pt, minimum size=2mm] (p\x) at (\x, 0) {};
		}
		\node[below] at (1,-0.3) {$2$};
		\node[below] at (2,-0.3) {$2$};
		\node[below] at (3,-0.3) {$1$};
		\node[below] at (4,-0.3) {$1$};
		\node[below] at (5,-0.3) {$2$};
		\draw[arc, red, bend left=30] (p3.north) to (p4.north);
		\draw[arc, blue, bend left=30] (p1.north) to (p2.north);
		\draw[arc, blue, bend left=30] (p1.north) to (p5.north);
		\draw[arc, blue, bend left=30] (p2.north) to (p5.north);
		\end{tikzpicture}
	\end{subfigure}
	\caption{Labeled matching representations. Red arcs: label $1$; blue arcs: label $2$.}
	\label{FIG:labeled matching}
\end{figure}

Patterns play a prominent role in the study of permutations. Here we focus on \textit{consecutive patterns} and refer to~\cite{Kitaev2011} for a comprehensive treatment. A consecutive pattern $p=\underline{p_1p_2\cdots p_s}$ is a word over an alphabet $[v]$ using every letter. We say that a permutation 
$\pi = \pi_1\pi_2\cdots \pi_K \in S_M$ \emph{contains} an occurrence of the pattern $p$ 
if there exists a factor $\pi_j \pi_{j+1} \cdots \pi_{j+s-1}$ of $\pi$ such 
that for any two indices $j_1, j_2 \in [s]$, we have 
$\pi_{j+j_1-1} < \pi_{j+j_2-1}$ (resp., $\pi_{j+j_1-1} = \pi_{j+j_2-1}$) if and only if $p_{j_1} < p_{j_2}$ (resp., $p_{j_1} = p_{j_2}$). 
If $\pi$ contains no occurrence of $p$, we say that $\pi$ \emph{avoids} $p$. For patterns $p^{(1)},\ldots,p^{(\lambda)}$, let $(p^{(1)},\ldots,p^{(\lambda)})\pi$ denote the tuple of their occurrence counts in $\pi$. Occurrences of $\underline{11}$, $\underline{12}$, and $\underline{21}$ are traditionally called \emph{plateaux} (or \emph{levels}), \emph{ascents}, and \emph{descents}, respectively. For $s\ge1$, we call the pattern $\underline{s\cdots21}$ a \emph{decreasing consecutive pattern}. The \emph{decreasing runs} of $\pi$ are its maximal consecutive factors with no ascents or plateaux. Let $\DR(\pi)=(m_1,\ldots,m_n)$, where $m_j$ is the number of decreasing runs of length $j$ in $\pi$. It is evident that the tuples $(\underline{1},\underline{21},\ldots,\underline{n\cdots21})\pi$ and $\DR(\pi)$ are in one-to-one correspondence via the equations
\begin{align} \label{EQ:p-DR}
\underline{s\cdots21}(\pi)=\sum_{j=s}^{n}(j-s+1)m_j \qquad \text{for }1\le s\le n.
\end{align}
For instance, for $M=\{1^3,2,3^2,4,5^2\}$ and $\pi=115315432$, we have $(\underline{1},\underline{21},\ldots,\underline{54321})\pi=(9,5,3,1,0)$ and $\DR(\pi)=(2,0,1,1,0)$, consistent with~\eqref{EQ:p-DR}.

Consecutive patterns in Stirling permutations have received considerable attention. Bóna~\cite{Bona2008} proved that ascents, plateaux and descents are equidistributed over classical Stirling permutations. Descent polynomials for general Stirling permutations were studied by Brenti~\cite{Brenti1989-2,Brenti1998-3} and Dzhumadil'daev and Yeliussizov~\cite{Dzhumadil2014-5}. Lin, Ma and Zhang~\cite{Lin2021} established partial $\gamma$-positivity for their enumerative polynomials with respect to the statistics of plateaux, descents and ascents. Some of these results have been extended to quasi-Stirling permutations. Elizalde~\cite{Elizalde2021} studied descent polynomials for classical quasi-Stirling permutations and extended the Archer–Gregory–Pennington–Slayden bijection to the equimultiplicity case. Yan and Zhu~\cite{YanZhu2022} further generalized this to arbitrary multisets, obtaining full descent polynomials. Later, Yan, Yang, Huang and Zhu~\cite{YanYangHuangZhu2022} constructed a bijection between quasi-Stirling permutations of $M$ and those of $M'=\{1^{K-n+1},2,\ldots,n\}$, where $K=k_1+k_2+\cdots+k_n$. This elegant bijection shows that the joint distribution of ascents, descents and plateaux is invariant under redistributions of letter multiplicities, thereby reducing the related enumeration on $M$ to the case $M'$.

This reduction naturally raises the question of whether other combinatorial statistics for quasi-Stirling permutations admit a similar property. In this paper, we give an affirmative answer. Following the spirit of the Yan–Yang–Huang–Zhu bijection, we construct a new bijection that preserves the distribution of decreasing runs. Consequently, the joint distribution of all decreasing consecutive patterns in quasi-Stirling permutations coincides with that for multipermutations with at most one repeated letter, and hence is independent of multiplicity redistribution.

\begin{thm}\label{THM:bijections-run-pattern}
	Let $M=\{1^{k_1},2^{k_2},\ldots,n^{k_n}\}$ be a multiset with $n\geq 1$ and $k_1,k_2,\ldots,k_n\in\mathbb P$. Let 
	$
	M'=\{1^{K-n+1},2,\ldots,n\}
	$ with $K=k_1+k_2+\cdots +k_n$.
	Then there exists a bijection $\Psi$ between $\overline{Q}_M$ and $\overline{Q}_{M'}$ such that for any $\pi\in \overline{Q}_M$,
	$$
	\DR(\pi)=\DR(\Psi(\pi))\quad\text{and}\quad(\underline{1},\underline{21},\ldots,\underline{n\cdots21})\pi=(\underline{1},\underline{21},\ldots,\underline{n\cdots21})\Psi(\pi).
	$$
\end{thm}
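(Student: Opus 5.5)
The plan is to build $\Psi$ as a composition of elementary steps. Each step moves a single copy of some letter $b$ with $k_b\ge 2$ onto the letter $1$. It is a bijection $\phi_b:\oQ_M\to\oQ_{M^{(b)}}$, where $M^{(b)}$ has multiplicities $k_1+1$ and $k_b-1$ and all others unchanged, and it satisfies $\DR(\phi_b(\pi))=\DR(\pi)$. Iterating until every letter $b\ge 2$ has multiplicity one lands in $\oQ_{M'}$ with $M'=\{1^{K-n+1},2,\ldots,n\}$. The second identity of Theorem~\ref{THM:bijections-run-pattern} then follows from the first by \eqref{EQ:p-DR}, since the tuple $(\underline{1},\ldots,\underline{n\cdots21})\pi$ is an invertible linear function of $\DR(\pi)$.

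The first step is to reformulate $\DR$ so that it is easy to control. Record for each $\pi=\pi_1\cdots\pi_K$ the set $N(\pi)$ of positions $j$ with $\pi_j\le\pi_{j+1}$ (ascents and plateaux). The decreasing runs are exactly the maximal intervals cut out by $N(\pi)$. Hence $\DR(\pi)$ is the multiset of gap lengths of $N(\pi)\cup\{0,K\}$. It therefore suffices for $\phi_b$ to preserve this multiset, and a permutation of the gaps is allowed. Concretely, $\phi_b$ may rearrange whole decreasing runs, but it must never merge or split one.

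The second step uses the structure of quasi-Stirling permutations given by noncrossing arcs. If $a=\pi_1$ occurs $k_a$ times, then
\[
\pi=a\,u_1\,a\,u_2\cdots a\,u_{k_a-1}\,a\,v ,
\]
where the $u_i$ and $v$ are quasi-Stirling permutations on pairwise disjoint alphabets not containing $a$. Applying the same decomposition inside each factor identifies $\oQ_M$ with labeled plane trees in which each letter labels one internal vertex with $k_\ell-1$ slots. The run structure is governed by comparisons at the junctions: each occurrence of $a$ with the last letter of the preceding $u_{i-1}$, and with the first letter of the following $u_i$ (or $v$).

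To define $\phi_b$, I will take the last arc $\ar(b;j_1,j_2)$ of $b$, so that no occurrence of $b$ lies after position $j_2$. I then cut $\pi$ into the factor $w=\pi_{j_1+1}\cdots\pi_{j_2-1}$ strictly inside this arc and the remainder. Deleting the final $b$ and reinserting a $1$ at a suitable position, chosen according to which runs $b$ and $1$ start or terminate, should give a bijection.

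The case analysis is whether $\pi_{j_2-1}>b$ and whether $b>\pi_{j_2+1}$, which determines whether that copy of $b$ ends a run, starts one, or sits inside one. The $1$ must be placed so that the same cut or non-cut pattern is reproduced. Since $1$ is the smallest letter, it always ends a run and is never followed by a descent. So I expect to insert it either at the end of the run that $b$ ended, or immediately after a $1$ forming a plateau, thereby creating a new run of length one where needed. Where the local run lengths do not match, I will compensate by moving the block $w$ as a whole; this is legitimate because it is enclosed by the arc of $b$ and can be transported without creating crossings.

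The main obstacle is exactly this step: making the reinsertion rule invertible while simultaneously keeping the result quasi-Stirling (the new $1$ must not cross any arc) and preserving the gap multiset. If a direct local rule proves too delicate, my fallback is to prove equidistribution by counting. Using the tree decomposition, I would derive a recurrence for $\sum_{\pi\in\oQ_M}\prod_j t_j^{m_j(\pi)}$ and show that it depends on $M$ only through $n$ and $K$. Equidistribution of $\DR$ over $\oQ_M$ and $\oQ_{M'}$ already guarantees the existence of a bijection $\Psi$ as in Theorem~\ref{THM:bijections-run-pattern}.
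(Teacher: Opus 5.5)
\paragraph{The elementary step is missing.} Your outer framework is sound and matches the paper's. You compose elementary bijections, each shifting one unit of multiplicity while preserving $\DR$, and you deduce the pattern identity from \eqref{EQ:p-DR}. The gap is that the elementary step $\phi_b$ is never defined. You leave as ``should'' and ``I expect'' exactly what makes the theorem nontrivial: where the new $1$ goes, how $w$ is transported, why the image is quasi-Stirling, why the gap multiset is preserved, and why the map is invertible.

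Moreover, the mechanism you sketch (delete the last $b$, insert one $1$, possibly move $w$) fails for arbitrary $b$. Take $\pi=143324\in\oQ_{\{1,2,3^2,4^2\}}$ and $b=3$.
\begin{itemize}
\item Here $w$ is empty, so moving $w$ changes nothing.
\item Deleting the last $3$ merges the runs $43$ and $32$ into $432$, turning the run lengths $\{1,2,2,1\}$ into $\{1,3,1\}$.
\item The only insertion of a $1$ into $14324$ that splits $432$ correctly is $141324$, and there the arcs of $1$ and $4$ cross.
\item The quasi-Stirling insertions are only $114324$ and $143241$, with run lengths $\{1,1,3,1\}$ and $\{1,3,2\}$.
\end{itemize}
With $b=4$ the naive rule does work on this $\pi$, which hints at why the paper always operates on the largest repeated letter.

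Your fallback is equally unsupported. In the tree decomposition, the junction comparisons depend on the relative values of letters in different subtrees. A recurrence showing that the distribution depends on $M$ only through $(n,K)$ would be the theorem itself, and you give no mechanism for obtaining one.

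\paragraph{How the paper avoids this.} The paper never deletes a letter. Instead it relabels one occurrence of $i$ as $i-1$, where $i$ is the \emph{largest} letter with $k_i>1$.
\begin{itemize}
\item Relabeling between adjacent values changes only comparisons with neighbouring copies of $i$ and $i-1$.
\item By maximality of $i$, every larger letter is a singleton. Such letters are irrelevant to the quasi-Stirling condition (one passes to $\theta(\pi)$) and can be carried inside transported factors.
\item Those factors are cut immediately after letters $\le i$ (via $\phi$). Hence the entries just before and after each moved factor form ascents or plateaux with its endpoints, both before and after the move.
\item The cases are split by whether $\theta(\pi)$ contains $i(i-1)$, and by whether $i-1$ lies in $D(\pi,i,k_i-1)$ or in $C(\pi,i,k_i-1)$. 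Together with Lemma~\ref{LEM:factor-keep}, this makes each case recognizable from the image, which gives invertibility. It also ensures that no relabeling happens inside a descent $i(i-1)$.
\end{itemize}
Multiplicity then travels down one value at a time, $\sum_{j=2}^{i}(j-1)(k_j-1)$ steps in total, rather than jumping directly onto $1$.
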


Building on this, we further study the distribution function of decreasing runs over quasi-Stirling permutations,
\[
\ccr(\vt;\oQ_M):=\sum_{\pi\in\oQ_M}\mathbf{t}^{\DR(\pi)},
\]
where $\vt=(t_1,t_2,\ldots)$ and $\mathbf{t}^{\DR(\pi)}=t_1^{m_1}t_2^{m_2}\cdots t_n^{m_n}$ for $\DR(\pi)=(m_1,m_2,\ldots,m_n)$. We express these functions
in terms of $\ccr(\vt;n,k):=\ccr(\vt;\oQ_{M'})$, where $M'=\{1^k,2,\ldots,n\}$,
and present the recurrence relation for $\ccr(\vt;n,k)$ together with its exponential generating function defined by
\begin{align}\label{EQ:def-cR}
\cR(\vt;x,y):=\sum_{n\ge1,\,k\ge0}\frac{\ccr(\vt;n,k)}{(n-1)!}x^n y^k.
\end{align}
Our derivation relies on the classical run theorem for multipermutations in~\cite{Gessel1977,Jackson1977,Goulden1983} and its specialization in~\cite{Yang2020}, where weight functions $W_n(\vt)$ are defined by the expansion
\begin{align}\label{EQ:W_n}
1-\sum_{n=1}^{\infty} W_n(\vt) z^n
= \left(1+\sum_{n=1}^{\infty} t_n z^n\right)^{-1}.
\end{align}
Moreover, let $\cW(\vt;x):=\sum_{n=1}^{\infty}W_n(\vt)x^n/n!$ denote the exponential generating function of $W_n(\vt)$.

\begin{thm}\label{THM:dis-run}
	Let $M=\{1^{k_1},2^{k_2},\ldots,n^{k_n}\}$ be a multiset with $n\geq 1$ and $k_1,k_2,\ldots,k_n\in\mathbb P$. Let 
	$K=k_1+k_2+\cdots +k_n$. Then
	\[
	\ccr(\vt;\oQ_M)=\ccr(\vt;n,K-n+1).
	\]
	Furthermore, $\ccr(\vt;n,k)$ satisfies the recurrence relations
	\begin{align*}
	\ccr(\vt;n,k)=
	\sum_{m=1}^{n}W_m(\vt)\left(\binom{n-1}{m}\ccr(\vt;n-m,k)+\binom{n-1}{m-1}\ccr(\vt;n-m+1,k-1)\right)
	\end{align*}
	for $n\geq1$ and $k\geq1$, together with $\ccr(\vt;n,0)=\ccr(\vt;n-1,1)$ for $n\geq2$,
	and initial conditions $\ccr(\vt;1,0)=1$ and $\ccr(\vt;n,k)=0$ whenever $n\leq0$ or $k<0$.
	The exponential generating function of $\ccr(\vt;n,k)$ is given by
	\begin{align*}
	\cR(\vt;x,y)=\frac{x}{1-\bar{\cW}(\vt;x)-y\cdot \bar{\cW}'(\vt;x)},
	\end{align*}
	where the prime denotes differentiation with respect to $x$.
\end{thm}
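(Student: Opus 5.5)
The first claim, $\ccr(\vt;\oQ_M)=\ccr(\vt;n,K-n+1)$, follows at once from Theorem~\ref{THM:bijections-run-pattern}. The bijection $\Psi\colon\oQ_M\to\oQ_{M'}$ preserves $\DR$, so summing $\vt^{\DR(\pi)}$ over $\oQ_M$ gives the same result as summing over $\oQ_{M'}$ with $M'=\{1^{K-n+1},2,\ldots,n\}$.

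For the recurrence I work directly with $M'=\{1^k,2,\ldots,n\}$. In a quasi-Stirling permutation of $M'$, only the letter $1$ is repeated. The noncrossing condition between arcs with distinct labels therefore says that each other letter $\ell\ge2$ lies under no arc of $1$ or over all of them. Hence the singletons are arbitrary, and every word of $M'$ is quasi-Stirling. (Indeed $\oQ_{M'}$ is then all multipermutations, since a crossing needs two repeated letters.)

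So $\ccr(\vt;n,k)$ is the decreasing-run distribution over all words with $k$ ones and the singletons $2,\ldots,n$. I would apply the run theorem of~\cite{Gessel1977,Jackson1977,Goulden1983}, in the specialized form of~\cite{Yang2020}. In that form the generating function is obtained by replacing each decreasing run with a weighted "strictly decreasing block" chosen with weight $W_m(\vt)$, as in \eqref{EQ:W_n}. The content is then a decomposition of words into sequences of strictly decreasing blocks with weights $W_m$.

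Since the $1$'s must be in distinct blocks, each block contains at most one $1$, and it sits at the end of the block. To derive the recurrence, I condition on the block containing the largest letter $n$ (or a distinguished first block) of size $m$. If that block contains no $1$, it consists of $n$ together with $m-1$ other singletons, chosen in $\binom{n-2}{m-1}$ ways. Adjusting the indexing so that the letter $1$ is counted among the $n$ "letter types" should yield the coefficients $\binom{n-1}{m}$ and $\binom{n-1}{m-1}$. If the block does contain a $1$, we lose one copy of $1$ and $m-1$ singletons, which gives $\ccr(\vt;n-m+1,k-1)$. The boundary relation $\ccr(\vt;n,0)=\ccr(\vt;n-1,1)$ is a relabeling: with no $1$'s, the letter $2$ becomes the single copy of the smallest letter.

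The generating function then follows by translating the recurrence into exponential/ordinary form in $(x,y)$ with the normalization $x^n/(n-1)!$. Differentiating in $x$ and using $\binom{n-1}{m}$, $\binom{n-1}{m-1}$ yields a linear equation of the form $\cR=x+\bar\cW\,\cR+y\,\bar\cW'\,\cR$ (after accounting for the $k=0$ boundary), which solves to the stated formula. Here $\bar\cW$ is presumably a suitably shifted version of $\cW$.

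The main obstacle is getting the bookkeeping in the run-theorem decomposition exactly right. This means choosing which block to peel off so that the binomials come out as $\binom{n-1}{m}$ and $\binom{n-1}{m-1}$, and treating the repeated letter $1$ correctly in the multiset version of the run theorem. I would first verify small cases such as $n=2$, $k=1$ to fix the conventions, including what $\bar\cW$ means.
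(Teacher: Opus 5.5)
Your first claim (via $\Psi$) is correct. So is your observation that every word of $\{1^k,2,\ldots,n\}$ is quasi-Stirling, which makes $\ccr(\vt;n,k)$ the coefficient of $x_1^kx_2\cdots x_n$ in the run-theorem series. Both are exactly what the paper uses.

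\textbf{The gap is the recurrence.} You never actually derive it, and the decomposition you lead with would not produce it. If you condition on the block containing $n$, the blocks before it and the blocks after it form two independent block sequences. You then get a bilinear convolution over all ways of splitting the remaining letters between them, not a linear recurrence in $\ccr(\vt;\cdot,\cdot)$. The count $\binom{n-2}{m-1}$ you obtain there cannot be turned into $\binom{n-1}{m}$ by ``adjusting the indexing''.

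The right choice is the first block, and it is cleanest done algebraically. The $W_m$ are signed (e.g.\ $W_2=t_2-t_1^2$), so there is no run-by-block correspondence to exploit. Multiply the run theorem through by $1-\sum_m W_m(\vt)e_m(\vx)$ and compare coefficients of $x_1^kx_2\cdots x_n$; this is not the constant term since $k\ge1$. The monomials of $e_m$ dividing it are of two kinds:
\begin{itemize}
\item squarefree ones avoiding $x_1$: there are $\binom{n-1}{m}$, each leaving $x_1^k$ times $n-1-m$ distinct singletons;
\item squarefree ones containing $x_1$: there are $\binom{n-1}{m-1}$, each leaving $x_1^{k-1}$ times $n-m$ singletons.
\end{itemize}
Because $\cR_M(\vt;\vx)$ is symmetric in $\vx$, the leftover coefficients are $\ccr(\vt;n-m,k)$ and $\ccr(\vt;n-m+1,k-1)$. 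For $m=n$, $k=1$ the leftover is the constant term $1=\ccr(\vt;1,0)$. This is how the paper argues.

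For the generating function, the $\bar{\cW}$ in the statement is a typo for $\cW$ itself, not a shifted version. The derivative comes only from the normalization, since $\binom{n-1}{m-1}/(n-1)!=1/\bigl((m-1)!\,(n-m)!\bigr)$. Your equation $\cR=x+\cW\cR+y\,\cW'\cR$ is the right target, and its $y^k$-coefficients for $k\ge1$ are exactly the recurrence.

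The $y^0$ part, $\cR(\vt;x,0)=x/(1-\cW(\vt;x))$, still needs an argument; ``accounting for the boundary'' is not enough. The paper takes it from the Gessel--Zhuang identity $1+\sum_{n\ge1}\ccr(\vt;n,1)x^n/n!=1/(1-\cW(\vt;x))$ together with $\ccr(\vt;n,0)=\ccr(\vt;n-1,1)$. Alternatively, it follows from the $k=1$ case of the recurrence combined with that same shift relation.
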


Analogous results for the distribution functions of decreasing consecutive patterns over quasi-Stirling permutations, including the cases of joint distributions, single patterns and pattern avoidance, are obtained in Theorems~\ref{THM:enum-joint-pattern},~\ref{THM:enum-p-dis} and~\ref{THM:avoid-p}, respectively. A related problem is to determine the maximum possible number of a given decreasing consecutive pattern in a quasi-Stirling permutation, and to enumerate the permutations that achieve this maximum. This question was raised by Archer et al.~\cite{Archer2019-1} and answered by Elizalde~\cite{Elizalde2021} for descents in classical quasi-Stirling permutations, and later generalized by Yan et al.~\cite{YanYangHuangZhu2022} to descents in general quasi-Stirling permutations. In this paper, we present the corresponding result for any non-trivial decreasing consecutive pattern.

\begin{cor}\label{COR:most-p}
	Let $s>1$ be an integer. Let $M=\{1^{k_1},2^{k_2},\ldots,n^{k_n}\}$ be a multiset with $n\geq s$ and $k_1,k_2,\ldots,k_n\in\mathbb P$. Let 
	$K=k_1+k_2+\cdots +k_n$. Then every quasi-Stirling permutation of $M$ contains at most $n-s+1$ occurrences of the pattern $\underline{s\cdots21}$. Moreover, the number of permutations $\pi\in\oQ_M$ with $(\underline{s\cdots21})\pi=n-s+1$ is $(K-n+1)^{n-1}$ if $s=2$, and $K-n+1$ if $s>2$.
\end{cor}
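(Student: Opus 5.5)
By Theorem~\ref{THM:bijections-run-pattern}, the map $\Psi$ preserves $\underline{s\cdots21}$ for every $s\le n$. Both the upper bound and the count of extremal permutations are therefore invariant when we pass from $M$ to $M'=\{1^{k},2,\ldots,n\}$ with $k=K-n+1\ge1$. The plan is to work entirely in $\oQ_{M'}$.

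\emph{Upper bound.} By~\eqref{EQ:p-DR}, $\underline{s\cdots21}(\pi)=\sum_{j\ge s}(j-s+1)m_j$. A decreasing run has strictly decreasing entries, so each run uses at most one copy of each letter. Since the letters $2,\ldots,n$ each occur once in $\pi\in\oQ_{M'}$, at most one run contains a given letter $\ell\ge2$. For a run of length $j\ge s$, write $j-s+1\le j-1$; the quantity $j-1$ counts the letters of the run other than its smallest one. Any entry larger than the run's minimum is a letter $\ge2$. Summing over runs of length $\ge s$, and using that distinct runs use disjoint letters from $\{2,\ldots,n\}$, gives $\underline{s\cdots21}(\pi)\le n-1-(s-2)\cdot\#\{\text{runs of length}\ge s\}$. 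If no run has length $\ge s$ the count is $0\le n-s+1$. Otherwise, at least one run has length $\ge s$, so the count is at most $n-1-(s-2)=n-s+1$.

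\emph{Equality for $s>2$.} Equality forces three things: exactly one run of length $\ge s$; that this run has length exactly $n$ (because $j-s+1=n-s+1$ needs $j=n$); and, as a consequence, that the run is $n(n-1)\cdots21$. The remaining $k-1$ copies of $1$ lie outside the run. The quasi-Stirling condition is automatic here, since only the letter $1$ repeats. So $\pi=1^a\,n(n-1)\cdots21\,1^{b}$ with $a+b=k-1$. Any such word has the long run as its only run of length $>1$ (the trailing $1$'s extend plateaux, and the leading $1$'s are followed by an ascent), and it attains the maximum. This gives exactly $k=K-n+1$ permutations.

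\emph{Equality for $s=2$.} Here the count is the number of descents, and equality means every letter $\ell\ge2$ is immediately followed by a smaller letter. Equivalently, $\pi$ decomposes into decreasing runs whose minima are all $1$, with each letter $\ge2$ in some run ending in $1$. Describe $\pi$ as a sequence of $k$ copies of $1$, where each copy of $1$ is preceded by a (possibly empty) decreasing word of letters from $\{2,\ldots,n\}$, these $k$ sets partition $\{2,\ldots,n\}$, and $\pi$ must end in $1$. Each letter independently chooses one of the $k$ blocks, which yields $k^{n-1}$. I expect the main care in this step to be verifying that this correspondence is a bijection onto the extremal set: every extremal word must end with a $1$ (otherwise its last letter $\ell\ge2$ is not followed by a descent), and any word of this form has exactly $n-1$ descents. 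Transporting back via $\Psi$ gives $(K-n+1)^{n-1}$ extremal permutations for $s=2$.
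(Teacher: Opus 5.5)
Your proof is correct, but after the common first step it follows a different route from the paper's. Both proofs use Theorem~\ref{THM:bijections-run-pattern} to move to $\oQ_{M'}=S_{M'}$ with $M'=\{1^{k},2,\ldots,n\}$ and $k=K-n+1$.

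The paper then works only with generating polynomials. It runs a double induction on $n$ and $k$, driven by the recurrence~\eqref{EQ:cps-rr} and the fact that $\tilde{W}_{s,n}(t)$ has leading term $t^{n-s+1}$. This shows that $\cp_s(t;n,k)$ has degree $n-s+1$, with leading coefficient $k^{n-1}$ when $s=2$ and $k$ when $s>2$. The bound is read off as the degree and the count as the top coefficient.

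You bypass the run theorem and the recurrence entirely. Runs are strictly decreasing, and each letter $\ge 2$ occurs once in $M'$. This gives the inequality $(\underline{s\cdots21})\pi\le n-1-(s-2)\,\#\{\text{runs of length}\ge s\}$, and your equality analysis then describes the extremal words explicitly:
\begin{itemize}
\item for $s>2$, they are $1^a\,n(n-1)\cdots21\,1^{b}$ with $a+b=k-1$;
\item for $s=2$, they are concatenations of $k$ blocks, each a decreasing word on letters $\ge2$ followed by a $1$, in bijection with functions $\{2,\ldots,n\}\to[k]$.
\end{itemize}

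The paper's route presents the corollary as a by-product of the generating-function machinery. The price is tracking which terms of the recurrence reach the top degree, which the paper treats only briefly. Your route is elementary and self-contained apart from $\Psi$. It explains where $k$ and $k^{n-1}$ come from, and it identifies the extremal elements of $\oQ_M$ as the $\Psi$-preimages of these explicit words.
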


The paper is organized as follows. Section~\ref{SEC:pre} introduces the necessary terminology and recognition criteria for quasi-Stirling permutations of multisets. In Section~\ref{SEC:bijections}, we construct a bijection between quasi-Stirling permutations of two multisets under multiplicity redistribution, and then complete the proof of Theorem~\ref{THM:bijections-run-pattern}. Using this bijection, Section~\ref{SEC:dis} computes the corresponding distribution functions for decreasing runs and decreasing consecutive patterns in two subsections. Section~\ref{SEC:dis-DR} is devoted to the proof of Theorem~\ref{THM:dis-run}. Section~\ref{SEC:dis-p} contains the proof of Corollary~\ref{COR:most-p} and also presents Theorems~\ref{THM:enum-joint-pattern},~\ref{THM:enum-p-dis}, and~\ref{THM:avoid-p}.

\section{Definitions and Recognition Criteria}\label{SEC:pre}

In this section, we introduce additional definitions and notation for quasi-Stirling permutations of multisets, along with some useful criteria for their recognition, in preparation for the bijective construction in Section~\ref{SEC:bijections}.
 
We first recall some basic definitions for permutations of a multiset, for the convenience of the reader. For any positive integer $m$, denote the set $\{1,2,\ldots,m\}$ by $[m]$. 
For a non-negative integer vector $(k_1,k_2,\ldots,k_n)\in\bN^n$, let $\{1^{k_1},2^{k_2},\ldots,n^{k_n}\}$ denote the \textit{multiset} $$\{\underbrace{1,\ldots,1}_{k_1},\underbrace{2,\ldots,2}_{k_2},\ldots,\underbrace{n,\ldots,n}_{k_n}\}$$ on the set of positive integers $\bP$. For a multiset $M=\{1^{k_1},2^{k_2},\ldots,n^{k_n}\}$, the integer $k_\ell$ is called the \textit{multiplicity} of the letter $\ell$ and the sum $k_1+k_2+\cdots+k_n$ is called the \textit{size} of $M$, denoted by $|M|$. A \textit{permutation} (also known in the literature as a \textit{multipermutation}) of $M$ is a sequence of \textit{length} $|M|$ where each \textit{letter} $\ell$ occurs exactly $k_\ell$ times for all $1\leq \ell\leq n$. We use $S_M$ to denote the set of all permutations of $M$. For a permutation $\pi = \pi_1 \pi_2 \cdots \pi_K$, a subsequence $\pi_j \pi_{j+1} \cdots \pi_{j'}$ with $1 \le j \le j' \le K$ is called a \emph{factor} of $\pi$.


We now introduce some terminology for bijective construction. Let $M=\{1^{k_1},2^{k_2},\ldots,n^{k_n}\}$ be a multiset and fix a letter $i\in[n]$. The \textit{remaining sequence} $R(\pi,i)$ of $i$ in $\pi$ is the subsequence obtained from $\pi$ by deleting all occurrences of $i$ and all letters between consecutive occurrences of $i$. Assuming $k_i>1$, for $\pi\in S_M$ and $k\in[k_i-1]$, the \textit{descendant sequence} $D(\pi,i,k)$ of the $k$-th $i$ in $\pi$ is the factor of $\pi$ strictly between the $k$-th and the $(k+1)$-th occurrences of $i$. The \textit{child sequence} $C(\pi,i,k)$ of the $k$-th $i$ in $\pi$ is defined via an iterative procedure. Start with $\alpha:=D(\pi,i,k)$ and $\beta:=()$. While $\alpha$ is nonempty, repeat the following operations.
\begin{enumerate}
	\item append the first letter $j$ of $\alpha$ to $\beta$;
	\item replace $\alpha$ by its factor after the last occurrence of $j$ in $\alpha$.
\end{enumerate}
The process terminates because the length of $\alpha$ strictly decreases in each step. When $\alpha=()$, set $C(\pi,i,k):=\beta$. The \textit{block set} $B(\pi,i)$ is defined as the collection of all nonempty sequences among $R(\pi,i)$ together with all descendant sequences whenever such sequences exist, i.e.,
\[
B(\pi,i)=\{R(\pi,i)\}\cup\{D(\pi,i,k):1\le k\le k_i-1\}\setminus\{()\}.
\]
While these definitions are valid for any multipermutation, our main focus is on their application to quasi-Stirling permutations. 

\begin{exa}\label{EX:childseq}
	Let $M=\{1^3,2,3^2,4^2,5^5,6,7,8\}$ and $\pi=5285146411733555\in\oQ_{M}$. Then we get $R(\pi,1)=5285733555$ and $R(\pi,5)=()$. Now fix $i=5$. We demonstrate the computation of the child sequence of the second occurrence of $5$ in $\pi$ as follows. First, we have $D(\pi,i,2)=146411733$.
	\begin{table*}[hbtp]
		\newcolumntype{Y}{>{\centering\arraybackslash}X}
		\begin{tabularx}{\textwidth}{c|Y|Y|Y|Y}
			\hline
			\diagbox{Sequences}{Steps} & 0        & 1   & 2  & 3    \\ \hline
			$\alpha$ & 146411733 & 733 & 33 & $()$ \\ \hline
			$\beta$  & $()$     & 1   & 17 & 173  \\ \hline			
		\end{tabularx}
		\caption{The process to obtain $C(\pi,i,2)$.}
		\label{TAB1}
	\end{table*}
	
	\noindent As shown in Table~\ref{TAB1}, after three iterations $\alpha$ becomes empty and the process stops. Hence, $C(\pi,i,2)=\beta=173$. Similarly, we have $D(\pi,i,1)=C(\pi,i,1)=28$ and $D(\pi,i,3)=C(\pi,i,3)=D(\pi,i,4)=C(\pi,i,4)=()$. Therefore, $B(\pi,i)=\{28,146411733\}$.
\end{exa}

Some of the terminology introduced above is motivated by the combinatorial 
interpretation of quasi-Stirling permutations in terms of ordered labeled trees. 
In the bijection of Yan and Zhu~\cite{YanZhu2022} from quasi-Stirling permutations 
to ordered labeled trees with certain restrictions, the factor $D(\pi,i,k)$ 
corresponds to the subtree rooted at the $k$-th child of the odd vertex labeled $i$ 
(more precisely, it corresponds to the labels of all vertices in that subtree except 
the root), and the sequence $C(\pi,i,k)$ corresponds to the labels of the children 
of that same $k$-th child. This explains our choice of the terms ``descendant sequence" 
and ``child sequence".
From the perspective of labeled matchings, these terms also have natural interpretations. 
By the equivalent definition of quasi-Stirling permutations, it suffices to consider 
only the arcs between consecutive occurrences of each letter. 
In this setting, the factor $D(\pi,i,k)$ corresponds to the segment enclosed by the 
$k$-th arc labeled $i$, and the sequence $C(\pi,i,k)$ corresponds to the labels of 
the outermost arcs and isolated vertices contained within that same arc, where the same label is recorded only once. For an illustration, see Figure~\ref{FIG:labeled matching_2}, 
which displays the labeled matching representation for the permutation in 
Example~\ref{EX:childseq}.
\begin{figure}[htbp]
	\centering
	\begin{tikzpicture}[
	every node/.style={font=\sffamily},
	arc/.style={thick}
	]
	
	\foreach \x in {1,...,16} {
		\node[circle, fill=black, inner sep=1pt, minimum size=2mm] (p\x) at (\x, 0) {};
	}
	
	\node[below] at (1, -0.3) {$5$};
	\node[below] at (2, -0.3) {$2$};
	\node[below] at (3, -0.3) {$8$};
	\node[below] at (4, -0.3) {$5$};
	\node[below] at (5, -0.3) {$1$};
	\node[below] at (6, -0.3) {$4$};
	\node[below] at (7, -0.3) {$6$};
	\node[below] at (8, -0.3) {$4$};
	\node[below] at (9, -0.3) {$1$};
	\node[below] at (10, -0.3) {$1$};
	\node[below] at (11, -0.3) {$7$};
	\node[below] at (12, -0.3) {$3$};
	\node[below] at (13, -0.3) {$3$};
	\node[below] at (14, -0.3) {$5$};
	\node[below] at (15, -0.3) {$5$};
	\node[below] at (16, -0.3) {$5$};

	\draw[arc, black, bend left=30] (p1.north) to (p4.north);
	\draw[arc, black, bend left=30] (p4.north) to (p14.north);
	\draw[arc, black, bend left=30] (p14.north) to (p15.north);
	\draw[arc, black, bend left=30] (p15.north) to (p16.north);
	
	\draw[arc, red, bend left=30] (p5.north) to (p9.north);
	\draw[arc, red, bend left=30] (p9.north) to (p10.north);
	
	\draw[arc, blue, bend left=30] (p6.north) to (p8.north);
	
	\draw[arc, green, bend left=30] (p12.north) to (p13.north);

	\end{tikzpicture}
	\caption{Labeled matching representation of $5285146411733555$, with distinct colors representing distinct arc labels.}
	\label{FIG:labeled matching_2}
\end{figure}

\noindent For instance, from the figure one can read off that the sequence $C(\pi,5,2)$ consists of $1$ (the red arc), $7$ (the isolated vertex), and $3$ (the green arc).
Furthermore, as we will show below, descendant and remaining sequences also give a convenient characterization of quasi-Stirling permutations.

\begin{prop}\label{PROP:test-1}
	Let $M=\{1^{k_1},2^{k_2},\ldots,n^{k_n}\}$ be a multiset and let $i$ be a letter in $M$. A permutation $\pi\in S_M$ is a quasi-Stirling permutation if and only if every sequence in $B(\pi,i)$ is a quasi-Stirling permutation, and no two of them share a common letter.
\end{prop}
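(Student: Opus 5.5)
We prove both directions through the labeled-matching description: $\pi$ is quasi-Stirling if and only if no two arcs with distinct labels cross, and it suffices to check arcs joining consecutive occurrences of a letter. Fix $i$ and let $p_1<p_2<\cdots<p_{k_i}$ be the positions of $i$ in $\pi$. These positions cut $[K]$ into the $k_i+1$ gaps $(-\infty,p_1)$, $(p_1,p_2)$, \ldots, $(p_{k_i-1},p_{k_i})$ and $(p_{k_i},\infty)$. The inner gaps carry exactly the descendant sequences $D(\pi,i,k)$. The two outer gaps, read together in order, carry $R(\pi,i)$, since $R(\pi,i)$ is $\pi$ with the segment $[p_1,p_{k_i}]$ deleted. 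The whole argument rests on the observation that the blocks in $B(\pi,i)$ correspond to these ``regions'' cut out by the $i$'s.

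For the forward direction, assume $\pi\in\oQ_M$. Every sequence in $B(\pi,i)$ is a subsequence of $\pi$, and the quasi-Stirling property is inherited by subsequences: a forbidden pattern $abab$ with $a\ne b$ in a subsequence is also one in $\pi$. So each block is quasi-Stirling. Now suppose a letter $j\ne i$ occurs in two different blocks. Then some occurrence of $j$ is at a position $q$ and another at a position $q'$, with some $p_r$ strictly between them and some other $p_{r'}$ outside $[q,q']$. We check this in both cases.
\begin{itemize}
\item If both blocks are descendant sequences, say in gaps $k<k'$, then $p_{k+1}$ lies between $q$ and $q'$, and $p_k$ lies before $q$.
\item If one block is $R(\pi,i)$, then one occurrence of $j$ lies outside $[p_1,p_{k_i}]$, say before $p_1$ (the case after $p_{k_i}$ is symmetric), and the other lies in an inner gap, so both $p_1$ and $p_{k_i}$ are usable.
\end{itemize}
In either case we get a subsequence $j\,i\,j\,i$ or $i\,j\,i\,j$, which contradicts $\pi\in\oQ_M$.

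For the converse, assume every block is quasi-Stirling and the blocks have pairwise disjoint letter sets. Suppose $\pi$ contains $a\,b\,a\,b$ with $a\ne b$ at positions $j_1<j_2<j_3<j_4$.
\begin{itemize}
\item If neither letter is $i$, then all occurrences of $a$ lie in a single block, and likewise for $b$. Since the positions interleave, $a$ and $b$ cannot lie in different regions: regions are unions of intervals, and the only region that is not an interval is the one for $R(\pi,i)$, where interleaving still forces a common block. So both lie in the same block, which then contains $abab$ as a subsequence, a contradiction.
\item If $a=i$, say, then $b$ has occurrences on both sides of some occurrence of $i$, namely $j_2<j_3<j_4$. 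Hence those two occurrences of $b$ lie in different regions, unless both lie in the two outer pieces forming $R(\pi,i)$. That would need $j_2<p_1$ and $j_4>p_{k_i}$, which is impossible because $j_1<j_2$ is itself a position of $i$. So they are in different blocks, contradicting disjointness. The case $b=i$ is symmetric.
\end{itemize}

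The main obstacle is the careful case analysis involving $R(\pi,i)$, the one region that is not contiguous. We handle it by noting that $R(\pi,i)$ is split only by the entire segment $[p_1,p_{k_i}]$, which contains all the $i$'s. A pattern that straddles this segment must therefore have both its occurrences of one letter outside it and both occurrences of the other letter either outside it too, in which case the pattern is already in $R(\pi,i)$, or inside it, in which case it must be $i$ and cannot interleave.
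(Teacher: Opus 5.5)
Your proof is correct and follows essentially the same route as the paper. For necessity, a letter occurring in two blocks produces a crossing $i\,j\,i\,j$ or $j\,i\,j\,i$; for sufficiency, a crossing involving a non-$i$ letter is forced into a single block, and if the other letter is $i$ it would instead put a letter into two blocks. Your version is more explicit than the paper's: you spell out why the non-contiguous region carrying $R(\pi,i)$ causes no trouble, and you note that blocks inherit the quasi-Stirling property as subsequences, both of which the paper leaves implicit.
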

\begin{proof}
	The case of $k_i=1$ is trivial, so we assume that $k_i>1$.
	
	For necessity, consider two non-empty child sequences $D(\pi,i,k)$ and $D(\pi,i,k')$ with $1\le k<k'\le k_i-1$. If some letter occurs in both sequences, then the arc between its two occurrences crosses the arc between the $k$-th and $k'$-th occurrences of $i$, a contradiction. A similar argument applies to a child sequence $D(\pi,i,k)$ and the remaining sequence $R(\pi,i)$.
	
	For sufficiency, we conversely suppose that there exist two crossing arcs $\ar(\ell_1;j_1,j_2)$ and $\ar(\ell_2;j_3,j_4)$ with $\ell_1\neq\ell_2$ in $\pi$. Then at least one of $\ell_1,\ell_2$ is not $i$. So we can assume without loss of generality that $\pi_{j_1}$ lies in some sequence $\alpha\in B(\pi,i)$. Since $\ell_1$ cannot appear in any other sequence, $\pi_{j_2}$ must lie in $\alpha$. By the crossing condition, either $\pi_{j_3}$ or $\pi_{j_4}$ must also lie in $\alpha$, and consequently all four elements belong to $\alpha$. But this contradicts the fact that $\alpha$ is itself a quasi-Stirling permutation. Therefore, $\pi$ has no pair of crossing arcs with distinct labels, and hence is a quasi-Stirling permutation.
\end{proof}

The above proposition can be applied recursively. For two distinct letters $i$ and $i'$, we define the \textit{block set} $B(\pi,(i,i'))$ of $(i,i')$ in $\pi$ to be
$
B(\pi,(i,i')):=\bigcup_{\alpha\in B(\pi,i)} B(\alpha,i').
$
In fact, this set essentially consists of the blocks obtained by slicing $\pi$ according to all occurrences of the letters $i$ and $i'$, except that some of them may need to be merged.
For instance, continuing from Example~\ref{EX:childseq}, we have $B(\pi,(5,4))=\{28,111733,6\}$, where the blocks $1$ and $11733$ are merged. Let $S(\pi,(i,i'))$ denote the subsequence of $\pi$ consisting of all occurrences of $i$ and $i'$. We then define
\[
\bar{B}(\pi,(i,i')):=B(\pi,(i,i'))\cup \{S(\pi,(i,i'))\}.
\]
Applying Proposition~\ref{PROP:test-1} twice yields the following refined characterization.

\begin{cor}\label{COR:test-2}
	Let $M=\{1^{k_1},2^{k_2},\ldots,n^{k_n}\}$ be a multiset and let $i, i'$ be two distinct letters in $M$. A permutation $\pi\in S_M$ is a quasi-Stirling permutation if and only if every sequence in $\bar{B}(\pi,(i,i'))$ is a quasi-Stirling permutation, and no two of them share a common letter.
\end{cor}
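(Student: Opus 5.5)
We apply Proposition~\ref{PROP:test-1} twice: first to $\pi$ with the letter $i$, then to each block $\alpha\in B(\pi,i)$ with the letter $i'$. By Proposition~\ref{PROP:test-1} for $i$, $\pi$ is quasi-Stirling if and only if every $\alpha\in B(\pi,i)$ is quasi-Stirling and the blocks of $B(\pi,i)$ are pairwise letter-disjoint. Since these blocks are disjoint, $i'$ occurs in at most one block, say $\alpha_0$, and the condition that the occurrences of $i$ and $i'$ do not cross must be recovered from $S(\pi,(i,i'))$.

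\emph{Step 1: expand the blocks.} For each $\alpha\in B(\pi,i)$ with $\alpha\ne\alpha_0$ (or with $i'$ absent), $B(\alpha,i')=\{\alpha\}$. Hence $\alpha$ is quasi-Stirling trivially, as an element of $B(\pi,(i,i'))$. For $\alpha_0$, Proposition~\ref{PROP:test-1} with letter $i'$ says that $\alpha_0$ is quasi-Stirling if and only if all blocks of $B(\alpha_0,i')$ are quasi-Stirling and pairwise letter-disjoint. Combining the two equivalences, $\pi$ is quasi-Stirling if and only if:
\begin{itemize}
\item every sequence in $B(\pi,(i,i'))$ is quasi-Stirling;
\item these sequences are pairwise disjoint, where the letters of $\alpha_0$ other than $i'$ are exactly the letters of $B(\alpha_0,i')$, so disjointness across the levels transfers;
\item the letter $i'$ lies in a single block of $B(\pi,i)$.
\end{itemize}

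\emph{Step 2: interpret the last condition via $S(\pi,(i,i'))$.} This is the main point to check. All occurrences of $i'$ lie in a single block of $B(\pi,i)$ exactly when no arc of $i$ separates two occurrences of $i'$. Symmetrically, it forces the occurrences of $i$ and $i'$ to be noncrossing. This is equivalent to the two-letter word $S(\pi,(i,i'))$ being quasi-Stirling. Moreover, $S$ shares no letter with the blocks of $B(\pi,(i,i'))$, since those blocks contain neither $i$ nor $i'$. So adding $S$ to the collection turns the extra condition into the uniform statement of the corollary.

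\emph{Step 3: the converse direction.} Assume the conditions of the corollary hold. Since $S$ is quasi-Stirling, the occurrences of $i'$ lie in one block $\alpha_0$ of $B(\pi,i)$. Reassembling, Proposition~\ref{PROP:test-1} applied with $i'$ shows that $\alpha_0$ is quasi-Stirling. Disjointness of $B(\pi,(i,i'))$ gives disjointness of the blocks of $B(\pi,i)$. Proposition~\ref{PROP:test-1} applied with $i$ then gives that $\pi$ is quasi-Stirling.

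\emph{Degenerate cases.} The cases $k_i=1$ or $k_{i'}=1$, and empty blocks, are routine and are handled the same way.
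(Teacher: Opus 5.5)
Your proposal is correct and is essentially the paper's argument: the paper's proof is only the remark that Proposition~\ref{PROP:test-1} is applied twice, first with $i$ on $\pi$ and then with $i'$ on each block. Your Step~2 spells out what the paper leaves implicit, namely that $S(\pi,(i,i'))$ being quasi-Stirling is exactly the condition that all copies of $i'$ lie in a single block of $B(\pi,i)$, which is what lets the two applications be chained. One small point, shared with the paper: the block sets must be read as indexed families (multisets) rather than sets. For example, for $\pi=213131$ and $(i,i')=(1,2)$, the two identical blocks $3$ must count as two blocks sharing a letter; otherwise the criterion would wrongly accept $\pi$.
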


This corollary will serve as a key tool in Section~\ref{SEC:bijections} for verifying that the resulting permutations produced by the mapping are indeed quasi-Stirling permutations.

\section{Bijections Preserving the Distribution of Decreasing Runs}\label{SEC:bijections}

The objective of this section is to construct a bijection between the set of quasi-Stirling permutations of a multiset and that of a modified multiset, obtained from the original by redistributing the multiplicities of its letters. Furthermore, we will show that this bijection preserves the distribution of decreasing runs and thus complete the proof of Theorem~\ref{THM:bijections-run-pattern}.

Our construction is inspired by the bijection of Archer et al.~\cite{Archer2019-1} for the classical case, as well as its extensions to more general multisets by Elizalde~\cite{Elizalde2021} and by Yan and Zhu~\cite{YanZhu2022}. These bijections establish a correspondence between quasi-Stirling permutations and ordered labeled trees with certain restrictions. Building on these results, Yan et al.~\cite{YanYangHuangZhu2022} further constructed a bijection between quasi-Stirling permutations of distinct multisets that is shown to preserve ascents, descents and plateaux. Our mapping follows a similar spirit, but is adapted to preserve the distribution of decreasing runs.

Let $M=\{1^{k_1},2^{k_2},\ldots,n^{k_n}\}$ be a multiset with $n\geq2$ and $k_1,k_2,\ldots,k_n\in\bP$ and set $K:=|M|$. Assume there exists a letter $i \in [n]\setminus\{1\}$ with $k_i > 1$, and take $i$ to be the maximal such integer. Throughout the remainder of this section, unless stated otherwise, we maintain this assumption and fix the multiset $M$ and the letter $i$.

We first introduce two auxiliary mappings. With the convention $\pi_0=0$, for any index $j\in[K]$, define $\phi(j)\in[j-1]\cup\{0\}$ to be the largest index such that $\pi_{\phi(j)}\leq i$. Note that $\phi$ is monotone on the set $\{j\in[K]: \pi_j\le i\}$. More precisely, if $j_1< j_2$ and $\pi_{j_1},\pi_{j_2}\leq i$, then $\phi(j_1)<j_1\leq\phi(j_2)<j_2$. For instance, using the setting in Example~\ref{EX:childseq}, take $j_1 = 1$, $j_2 = 7$ and $j_3 = 8$. Then $\phi(j_1)=0<j_1<\phi(j_2)=\phi(j_3)=6<j_2<j_3$. The second mapping $\theta$ acts on a multipermutation by deleting all letters greater than $i$. Continuing with Example~\ref{EX:childseq}, we have $\theta(\pi)=5251441133555$.

Now we are ready to present the desired mapping. Let $M'=\{1^{k_1},\ldots,(i-1)^{k_{i-1}+1},i^{k_{i}-1},\ldots,n^{k_n}\}$. For any $\pi=\pi_1\pi_2\cdots\pi_K\in\oQ_M$, we define $\Phi(\pi)\in S_{M'}$ according to the following two cases. 

\noindent{\bf Case 1.} Suppose that $\theta(\pi)$ contains the factor $i(i-1)$.

\noindent{\bf Case 1.1.} If there exists some $i$ occurring after the last $i-1$ in $\pi$, then $\Phi(\pi)$ is obtained from $\pi$ by replacing the first such $i$ with $i-1$.

\noindent{\bf Case 1.2.} Otherwise, $\Phi(\pi)$ is obtained by replacing the first $i$ in $\pi$ with $i-1$.

\noindent{\bf Case 2.} Suppose that $\theta(\pi)$ does not contain the factor $i(i-1)$. 

\noindent\textbf{Case 2.1.} Assume that $D(\pi,i,k_i-1)$ does not contain $i-1$. Let $\pi_{j_1}$ and $\pi_{j_2}$ be the last two occurrences of $i$ with $j_1<j_2$, and let $\pi_{j_3}$ be the last occurrence of $i-1$. Then either $j_1<j_2<j_3$ or $j_3<j_1<j_2$, which means that the case $\phi(j_1)<\phi(j_3)\le \phi(j_2)$ cannot occur. Set $\alpha=\pi_{\phi(j_1)+1}\pi_{\phi(j_1)+2}\cdots\pi_{\phi(j_2)}$. Clearly, $\alpha$ contains exactly one occurrence of $i$. Then $\Phi(\pi)$ is obtained from $\pi$ by moving the factor $\alpha$ to the position immediately after $\pi_{\phi(j_3)}$, with that unique occurrence of $i$ in $\alpha$ replaced by $i-1$. We present the explicit forms of $\pi$ and $\Phi(\pi)$ for the case $j_1 < j_2 < j_3$ as follows, where the ordinal number in the first row indicates the occurrence count of the corresponding letter, and $\rho_j$ denotes the longest factor consisting of letters greater than $i$. The other case is analogous.
\begin{gather}\label{EQ:form-case2.1}
\pi=\cdots \underbrace{\rho_1
	\mkern-17mu \overset{\overset{(k_i-1)\text{-th}}{\downarrow}}{i}\mkern-17mu
	\cdots}_{\alpha}
\rho_2
\mkern-5mu
\overset{\overset{k_i\text{-th}}{\downarrow}}{i}
\mkern-5mu
\cdots
\rho_3
\overset{\overset{k_{i-1}\text{-th}}{\downarrow}}{(i-1)}
\cdots
\ \mapsto\ 
%
%
\Phi(\pi)=\cdots 
\rho_2
\mkern-17mu
\overset{\overset{(k_i-1)\text{-th}}{\downarrow}}{i}
\mkern-17mu
\cdots
\underbrace{\rho_1
	\overset{\overset{k_{i-1}\text{-th}}{\downarrow}}{(i-1)}
	\cdots}_{\alpha}
\rho_3
\mkern-3mu
\overset{\overset{(k_{i-1}+1)\text{-th}}{\downarrow}}{(i-1)}
\mkern-3mu
\cdots
\end{gather}

\noindent\textbf{Case 2.2.} Assume that $i-1$ lies in $D(\pi,i,k_i-1)$ but not in $C(\pi,i,k_i-1)$. Let $\pi_{j_1}$ and $\pi_{j_2}$ be the first and the $(k_i-1)$-th occurrences of $i$, respectively. Let $\pi_{j_3}$ and $\pi_{j_4}$ be the first and the last~occurrences of $i-1$, respectively. Then $j_1\le j_2<j_3\le j_4$ and thus $\phi(j_1)\le \phi(j_2)<\phi(j_3)\le \phi(j_4)$. Define
\[
\alpha_1=\pi_{\phi(j_1)+1}\pi_{\phi(j_1)+2}\cdots\pi_{\phi(j_2)}\qquad\text{and}\quad
\alpha_2=\pi_{\phi(j_3)+1}\pi_{\phi(j_3)+2}\cdots\pi_{\phi(j_4)},
\]
where each $\alpha_k$ is empty if its starting index exceeds its ending index. To obtain $\Phi(\pi)$, we first remove the factors $\alpha_1$ and $\alpha_2$ from $\pi$. Note that the resulting permutation contains exactly two occurrences of $i$ and one occurrence of $i-1$. Then interchange the letters $i$ and $i-1$ simultaneously, and finally insert $\alpha_1$ after $\pi_{\phi(j_3)}$ and $\alpha_2$ after $\pi_{\phi(j_1)}$ (i.e., swap the positions of the two factors). The resulting permutation is $\Phi(\pi)$, which takes the following form, with the ordinal numbers and $\rho_j$'s denoted as in~\eqref{EQ:form-case2.1}.
\begin{gather}
\pi=\cdots \underbrace{\rho_1
	\overset{\overset{\text{1st}}{\downarrow}}{i}
	\cdots}_{\alpha_1}
\rho_2
\mkern-17mu \overset{\overset{(k_i-1)\text{-th}}{\downarrow}}{i}\mkern-17mu
\cdots
\underbrace{\rho_3
	\overset{\overset{\text{1st}}{\downarrow}}{(i-1)}
	\cdots}_{\alpha_2}
\rho_4
\overset{\overset{k_{i-1}\text{-th}}{\downarrow}}{(i-1)}
\cdots
\mkern-5mu
\overset{\overset{k_i\text{-th}}{\downarrow}}{i}
\mkern-5mu
\cdots\nonumber\\
\rotatebox[origin=c]{-90}{\ $\mapsto$\ }\nonumber\\
\Phi(\pi)=\cdots \underbrace{\rho_3
	\overset{\overset{\text{1st}}{\downarrow}}{(i-1)}
	\cdots}_{\alpha_2}
\rho_2
\overset{\overset{k_{i-1}\text{-th}}{\downarrow}}{(i-1)}
\cdots
\underbrace{\rho_1
	\overset{\overset{\text{1st}}{\downarrow}}{i}
	\cdots}_{\alpha_1}
\rho_4
\mkern-17mu
\overset{\overset{(k_{i}-1)\text{-th}}{\downarrow}}{i}
\mkern-17mu
\cdots
\mkern-3mu
\overset{\overset{(k_{i-1}+1)\text{-th}}{\downarrow}}{(i-1)}
\mkern-3mu
\cdots\label{EQ:form-case2.2}
\end{gather}

\noindent\textbf{Case 2.3.} Assume that $C(\pi,i,k_i-1)$ contains $i-1$. Let $\pi_{j_1}$ and $\pi_{j_3}$ be the first and the last occurrences of $i$ respectively, and let $\pi_{j_2}$ be the first occurrence of $i-1$. Then we have $j_1<j_2<j_3$, and hence $\phi(j_1)<\phi(j_2)<\phi(j_3)$. Set $\alpha_1=\pi_{\phi(j_1)+1}\pi_{\phi(j_1)+2}\cdots\pi_{\phi(j_2)}$ and $\alpha_2=\pi_{\phi(j_2)+1}\pi_{\phi(j_2)+2}\cdots\pi_{\phi(j_3)}$. Then $\Phi(\pi)$ is obtained from $\pi$ by interchanging the positions of the factors $\alpha_1$ and $\alpha_2$ and changing the letter of $\pi_{j_3}$ from $i$ to $i-1$. 
The explicit forms of $\pi$ and $\Phi(\pi)$ are shown below, where the ordinal numbers and $\rho_j$'s are as in~\eqref{EQ:form-case2.1}.
\begin{gather}
\pi=\cdots \underbrace{\rho_1
	\overset{\overset{\text{1st}}{\downarrow}}{i}
	\cdots
\rho_2
\mkern-17mu \overset{\overset{(k_i-1)\text{-th}}{\downarrow}}{i}\mkern-17mu
\cdots}_{\alpha_1}
\underbrace{\rho_3
	\overset{\overset{\text{1st}}{\downarrow}}{(i-1)}
	\cdots
\rho_4
\overset{\overset{k_{i-1}\text{-th}}{\downarrow}}{(i-1)}
\cdots}_{\alpha_2}
\rho_5
\mkern-5mu
\overset{\overset{k_i\text{-th}}{\downarrow}}{i}
\mkern-5mu
\cdots\nonumber\\
\rotatebox[origin=c]{-90}{\ $\mapsto$\ }\nonumber\\
\Phi(\pi)=\cdots \underbrace{\rho_3
	\overset{\overset{\text{1st}}{\downarrow}}{(i-1)}
	\cdots
\rho_4
\overset{\overset{k_{i-1}\text{-th}}{\downarrow}}{(i-1)}
\cdots
}_{\alpha_2}
\underbrace{\rho_1
	\overset{\overset{\text{1st}}{\downarrow}}{i}
	\cdots
\rho_2
\mkern-17mu
\overset{\overset{(k_{i}-1)\text{-th}}{\downarrow}}{i}
\mkern-17mu
\cdots
}_{\alpha_1}
\rho_5
\mkern-3mu
\overset{\overset{(k_{i-1}+1)\text{-th}}{\downarrow}}{(i-1)}
\mkern-3mu
\cdots\label{EQ:form-case2.3}
\end{gather}

The preceding definition ensures that $\Phi(\pi)$ is well-defined and belongs to $S_{M'}$. We now present several examples to illustrate the map $\Phi$ in concrete cases.

\begin{exa}\label{EX:Phi}
	Following Example~\ref{EX:childseq}, we apply $\Phi$ to $\pi$ iteratively 21 times and eventually obtain a permutation in $S_{\{1^9,2,3,4,5,6,7,8\}}$ as follows:
	\begin{align*}
	&5285146411733555\stackrel{{\cred\Phi}}{\longrightarrow}5285144641173355\stackrel{\Phi}{\longrightarrow}5285144464117335\stackrel{{\cred\Phi}}{\longrightarrow}4448415265117334\\
	\stackrel{\Phi}{\longrightarrow}\,&4448416511733424\stackrel{\Phi}{\longrightarrow}4448416511733234\stackrel{{\cred\Phi}}{\longrightarrow}7332344484165113\stackrel{\Phi}{\longrightarrow}7332344841651133\\
	\stackrel{\Phi}{\longrightarrow}\,&7332348416511333\stackrel{\Phi}{\longrightarrow}7332384165113333\stackrel{\Phi}{\longrightarrow}7332284165113333\stackrel{\Phi}{\longrightarrow}7332284165112333\\
	\stackrel{\Phi}{\longrightarrow}\,&7332284165112233\stackrel{\Phi}{\longrightarrow}7332284165112223\stackrel{\Phi}{\longrightarrow}7332284165112222\stackrel{{\cred \Phi}}{\longrightarrow}7232284165112222\\
	\stackrel{\Phi}{\longrightarrow}\,&7232284165111222\stackrel{{\cred\Phi}}{\longrightarrow}7232284165111122\stackrel{\Phi}{\longrightarrow}7232284165111112\stackrel{\Phi}{\longrightarrow}7232284165111111\\
	\stackrel{\Phi}{\longrightarrow}\,&7132284165111111\stackrel{\Phi}{\longrightarrow}7131284165111111
	\end{align*}
	
	We select five of these iterations (highlighted in the above display) to illustrate the map in detail.
	
	\begin{enumerate}
		\item We first compute $\Phi(5285146411733555)$, which falls under {\bf Case~2.1}. In this case, we have $j_1=15$, $j_2=16$ and $j_3=8$, which give $\phi(j_1)=14$, $\phi(j_2)=15$ and $\phi(j_3)=6$. Hence $\alpha = 5$, and we obtain $\Phi(5285146411733555)=528514{\cred 4}641173355$.
		
		\item The computation of $\Phi(5285144464117335)$ corresponds to {\bf Case~2.2}. Here $j_1=1$, $j_2=4$, $j_3=6$ and $j_4=10$, yielding $\phi(j_1)=0$, $\phi(j_2)=2$, $\phi(j_3)=5$ and $\phi(j_4)=8$. Thus $\alpha_1=52$ and $\alpha_2=444$. Removing these two factors gives $\pi' = {}_{\uparrow}851_{\uparrow}64117335$, which contains exactly two occurrences of $5$ and one occurrence of $4$. Interchanging $4$ and $5$ in $\pi'$ yields $_{\uparrow}841_{\uparrow}65117334$. Inserting $\alpha_1$ and $\alpha_2$ into each other's original positions, we obtain $\Phi(5285144464117335)={\cblue 444}841{\cred 52}65117334$.
		
		\item We now compute $\Phi(4448416511733234)$. With $i=4$, we are in {\bf Case~2.3}. We get $j_1=1$, $j_2=12$ and $j_3=16$, so $\phi(j_1)=0$, $\phi(j_2)=10$ and $\phi(j_3)=15$. Therefore $\alpha_1=4448416511$ and $\alpha_2=73323$, and we finally obtain $\Phi(4448416511733234)={\cblue 73323}{\cred 4448416511}3$.
		
		\item Taking $i=3$, we have $\Phi(7332284165112222)=7{\cred 2}32284165112222$ by {\bf Case~1.2}.
		
		\item The computation of $\Phi(7232284165111222)$ falls under $i=2$ and {\bf Case~1.1}. Direct application of the rule leads to $\Phi(7232284165111222)=7232284165111{\cred 1}22$.
	\end{enumerate}
\end{exa}

\begin{rem}
	If $i = n$, then $\theta$ acts as the identity map on $\oQ_M$. Moreover, if we further restrict to the subset of $\oQ_M$ consisting of permutations satisfying the condition of {\bf Case 2}, then our map $\Phi$ coincides with the Yan-Yang-Huang-Zhu bijection from~\cite{YanYangHuangZhu2022}. Though the two mappings differ in general, the underlying strategy for constructing $\Phi$ is based on the Yan-Yang-Huang-Zhu bijection. After handling the special situation of {\bf Case 1} separately, we first remove all letters greater than $i$ by applying the map $\theta$, then apply the Yan-Yang-Huang-Zhu bijection to obtain a preliminary permutation, and finally reinsert those larger letters into suitable positions so that the distribution of decreasing runs remains invariant.
\end{rem}

The remainder of this section is devoted to showing that $\Phi$ is indeed a bijection from $\oQ_M$ to $\oQ_{M'}$ which preserves the distribution of decreasing runs.

\begin{lem}\label{LEM:factor-keep}
	For any $\pi\in\oQ_M$, $\theta(\pi)$ contains the factor $i(i-1)$ if and only if $\theta(\Phi(\pi))$ does.
\end{lem}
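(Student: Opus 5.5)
We will argue case by case, following the definition of $\Phi$. The key observation is that $\theta$ commutes with the relevant operations in a controlled way. Deleting letters greater than $i$ does not affect which letters equal $i$ or $i-1$ or their relative order. Moreover, each factor $\alpha,\alpha_1,\alpha_2$ begins right after a position $\phi(\cdot)$ holding a letter $\le i$ and ends at such a position. Hence $\theta(\Phi(\pi))$ is obtained from $\theta(\pi)$ by the "same" operation: a letter replacement in Case~1, and moving or swapping the images $\theta(\alpha)$, $\theta(\alpha_1)$, $\theta(\alpha_2)$ in Case~2, where these images are factors of $\theta(\pi)$ cut exactly at boundaries between letters $\le i$. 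So it suffices to track occurrences of the factor $i(i-1)$ in the word $\theta(\pi)$ over the alphabet $[i]$.

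\emph{Case 1 (forward direction).} Here $\theta(\pi)$ contains $i(i-1)$, and we must show $\theta(\Phi(\pi))$ still does.

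In Case~1.1, the changed $i$ lies after the last $i-1$. So every factor $i(i-1)$ of $\theta(\pi)$ uses an $i$ before the last $i-1$, and these are untouched.

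In Case~1.2, every $i$ precedes the last $i-1$, and the first $i$ is changed. If some factor $i(i-1)$ uses an $i$ other than the first, it survives. The delicate situation is when the only occurrence of $i(i-1)$ in $\theta(\pi)$ uses the first $i$. We then need the quasi-Stirling property (no crossing arcs) together with $k_i\ge 2$ to conclude that a new factor appears in $\theta(\Phi(\pi))$. If the first $i$ is immediately followed (in $\theta$) by $i-1$, then after replacement we get $(i-1)(i-1)$. Some later $i$ exists, and all $i$'s precede the last $i-1$. By non-crossing, the arc from the first $i$ to the second $i$ cannot straddle only part of the $(i-1)$-arcs, so the $i-1$'s enclosed form a block. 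After the last $i$, the next letter $\le i$ must then be $i-1$, since $i-1$ cannot occur between consecutive $i$'s and after them as well. That gives a factor $i(i-1)$ using the last $i$. I would verify this carefully, as it is the one genuinely non-mechanical point in Case~1.

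\emph{Case 2 (converse direction).} Here $\theta(\pi)$ has no factor $i(i-1)$, and we must show $\theta(\Phi(\pi))$ has none either. We examine the new adjacencies created at the cut points, using the explicit forms \eqref{EQ:form-case2.1}--\eqref{EQ:form-case2.3}.

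In each subcase the junctions are of the form "letter at $\phi(j)$" followed by "first letter $\le i$ of the moved block". From the displayed forms, after the relabeled $i-1$ the next letter $\le i$ is whatever followed in $\pi$, and that was not $i-1$ after an $i$. For each junction we determine which letter $\le i$ sits on each side and check that no $i$ is followed by $i-1$. The main obstacle here is Case~2.2, where letters are swapped. There we use that $\pi_{\phi(j_1)}$ and $\pi_{\phi(j_3)}$ are letters $<i-1$ or boundary zeros; otherwise $\theta(\pi)$ would contain an $i(i-1)$ or contradict the first-occurrence choices.

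\emph{Combining.} The two directions together give the lemma once we know $\Phi$ maps each case into itself. More precisely, Case~1 preserves the presence of the factor, and Case~2 preserves its absence. So "$\theta(\pi)$ contains $i(i-1)$" holds if and only if "$\theta(\Phi(\pi))$ contains $i(i-1)$", where $\theta$ on the image is computed with the same $i$. Should the image have $k_i-1=1$, so that $i$ is no longer repeated, the statement is still about $\theta$ defined via the same fixed $i$.
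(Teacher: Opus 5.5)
Your route is the paper's (a case split along the definition of $\Phi$, tracking adjacencies among letters $\le i$), but the decisive step is missing. In Case~2.2 you place the difficulty at the junctions $\pi_{\phi(j_1)}$ and $\pi_{\phi(j_3)}$. These are harmless for trivial reasons: $\pi_{\phi(j_1)}\neq i$ because $j_1$ is the first $i$, and $\pi_{\phi(j_3)}$ is followed in $\Phi(\pi)$ by an $i$, not by an $i-1$.

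The real danger is the relabeling itself. The last $i-1$ of $\pi$ becomes an $i$, the last $i$ of $\pi$ becomes an $i-1$, and nothing is moved between them. Suppose that in $\theta(\pi)$ the last $i-1$ were immediately followed by the last $i$. This is a factor $(i-1)i$, which the Case~2 assumption does not exclude, and then $\theta(\Phi(\pi))$ would contain $i(i-1)$. Your remark that after a relabeled letter ``the next letter $\le i$ is whatever followed in $\pi$'' does not cover this, because that follower is itself relabeled.

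What excludes it is the hypothesis $i-1\notin C(\pi,i,k_i-1)$, which your proposal never uses. This hypothesis forces some $j\in C(\pi,i,k_i-1)$ to occur in $D(\pi,i,k_i-1)$ both before the first $i-1$ and after the last $i-1$. Such a $j$ is repeated, so $j\le i$ by the maximality of $i$. Also $j\neq i$ and $j\neq i-1$. Hence a letter $j<i-1$ separates the last $i-1$ from the last $i$ in $\theta(\pi)$. The hypothesis is genuinely needed: for $\pi=3123$, where $i=3$ and $2\in C(\pi,3,1)$, the Case~2.2 operation would give $2132$, which contains $32$.

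The checks in Cases~2.1 and~2.3 are routine, but you only announce them. In 2.1, $\pi_{\phi(j_3)}\neq i$, since otherwise $\pi_{\phi(j_3)}\pi_{j_3}$ would be a factor $i(i-1)$ of $\theta(\pi)$. In 2.3, the last letter $\le i$ of $\alpha_1$ is followed by an $i-1$ both before and after the swap.

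In Case~1.2 your ``delicate situation'' is vacuous, and the inference you draw there does not follow. Since $S(\pi,(i,i-1))$ is a quasi-Stirling word on two letters, it has at most three runs. In Case~1.2 it is $(i-1)^p\,i^{k_i}\,(i-1)^r$ with $r\ge 1$. Hence the unique factor $i(i-1)$ of $\theta(\pi)$ uses the $k_i$-th $i$, which is not the replaced first $i$ because $k_i\ge 2$. Your observation that $i-1$ cannot lie both between consecutive $i$'s and after the last $i$ shows exactly that the first $i$ is never followed by $i-1$. That contradicts the scenario; it does not show that ``a new factor appears.''
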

\begin{proof}
	We proceed the proof by cases.
	
	In \textbf{Case 1}, the sequence $S(\pi,(i,i-1))$ (equivalently, $S(\theta(\pi),(i,i-1))$) is necessarily of one of the following two forms:
	\begin{align*}
	i\cdots i\,(i-1)\cdots(i-1)\,i\cdots i
	\quad\text{or}\quad
	\underbrace{(i-1)\cdots(i-1)}_{\text{may be empty}}\,i\cdots i\,(i-1)\cdots(i-1),
	\end{align*}
	corresponding to {\bf Case 1.1} and {\bf Case 1.2} respectively. Since $k_i>1$, the map's replacement of $i$'s by $i-1$'s leaves the unique factor $i(i-1)$ in $\theta(\pi)$ intact. Thus $\theta(\Phi(\pi))$ retains it as well.
	
	In \textbf{Case 2}, it suffices to show that $\theta(\Phi(\pi))$ has no factor $i(i-1)$. We first consider \textbf{Case 2.2} and \textbf{Case 2.3}. For \textbf{Case 2.2}, it is enough to note that there must be some letter smaller than $i-1$ between the last $i-1$ and the last $i$ in $\pi$. This follows immediately from the assumption that $i-1$ does not occur in $C(\pi,i,k_i-1)$. In \textbf{Case 2.3}, it is clear from~\eqref{EQ:form-case2.3} that $\theta(\pi)$ contains the factor $i(i-1)$ if and only if no letter smaller than $i-1$ lies between the $(k_i-1)$-th $i$ and the first $i-1$ in $\pi$; this condition is equivalent to the presence of the factor $i(i-1)$ in $\theta(\Phi(\pi))$.
	
	Now it remains to prove the lemma for \textbf{Case 2.1}. In this case, no $i-1$ occurs in the subsequence $\pi_{j_1}\pi_{j_1+1}\cdots\pi_{j_2}$, hence none occurs in $\alpha$. Thus a factor $i(i-1)$ in $\theta(\Phi(\pi))$ can only arise from:
	\begin{enumerate}
		\item the adjacent pair formed by $\pi_{\phi(j_3)}$ and the $i-1$ inside $\alpha$ after the replacement, which would require $\pi_{\phi(j_3)}=i$;
		\item the new adjacency created by removing $\alpha$, which would require $\pi_{j_4}=i-1$, where $j_4=\min\{j>\phi(j_2): \pi_j\le i\}$.
	\end{enumerate}
	The first possibility corresponds to the factor $\pi_{\phi(j_3)}\pi_{j_3}=i(i-1)$ in $\theta(\pi)$. The second implies $j_4=j_2$ and thus $\pi_{j_4}=i$. Therefore, both are impossible. This completes the proof.
\end{proof}

\begin{lem}
	The map $\Phi$ is a bijection from $\oQ_M$ to $\oQ_{M'}$.
\end{lem}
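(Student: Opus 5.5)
The plan has two parts. First we show that $\Phi(\pi)$ lies in $\oQ_{M'}$. Then we construct an explicit inverse $\Phi^{-1}:\oQ_{M'}\to\oQ_M$.

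\textbf{Well-definedness.} We use Corollary~\ref{COR:test-2} with the pair $(i,i-1)$. In every case, $\Phi$ changes only three things: the sequence $S(\pi,(i,i-1))$, the relative order of a few factors whose boundaries are chosen via $\phi$, and one letter $i$ that becomes $i-1$. The endpoints $\phi(j)$ sit immediately after letters $\le i$. Hence each moved factor $\alpha,\alpha_1,\alpha_2$ starts right after a letter $\le i$ and ends at a letter $\le i$. The maximal runs $\rho_j$ of letters larger than $i$ therefore travel with their adjacent letter $\le i$ and are never split.

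We check case by case that each block in $B(\Phi(\pi),(i,i-1))$ equals a block of $B(\pi,(i,i-1))$, or a merger or split of such blocks that stays compatible with Proposition~\ref{PROP:test-1}. For instance, in Case~2.1 the factor $\alpha$ carries the $(k_i-1)$-th descendant sequence into a new position between occurrences of $i-1$. We then check that the new $S$ has one of the noncrossing forms $i^a(i-1)^b i^c$ or $(i-1)^a i^b (i-1)^c$ (allowing empty parts). Disjointness of the letter sets of blocks is inherited from $\pi$, because the multiset of letters other than $i,i-1$ inside each block is unchanged. Maximality of $i$ means every letter $>i$ has multiplicity $1$, so these letters never create crossings. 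This makes the reinsertion of the $\rho_j$'s harmless.

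\textbf{Inverse.} Lemma~\ref{LEM:factor-keep} shows that Case~1 and Case~2 images are distinguished by whether $\theta(\sigma)$ contains $i(i-1)$. We then split the image of Case~2 into three subclasses according to the shape of $S(\sigma,(i,i-1))$ and the position of the $(k_{i-1}+1)$-th occurrence of $i-1$ relative to the occurrences of $i$.
\begin{itemize}
\item In Case~2.1, the last $i-1$ comes after the last $i$, and the penultimate $i-1$ lies in a position "not enclosed".
\item In Case~2.2, $S$ has the form $(i-1)\cdots(i-1)\, i\cdots i\,(i-1)$, with $i$'s strictly between.
\item In Case~2.3, $S$ has the form $(i-1)^{k_{i-1}} i^{k_i-1} (i-1)$ and the middle block is contiguous in the $\phi$-sense.
\end{itemize}
For Case~1, the inverse replaces the appropriate $i-1$ (the first one after the last $i$, respectively the first $i-1$ before the $i$'s) back by $i$. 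For each Case~2 subclass, we reverse the factor moves using the same $\phi$-indices, which are recoverable because letters $\le i$ keep their relative structure. We then verify $\Phi^{-1}\circ\Phi=\mathrm{id}$ and $\Phi\circ\Phi^{-1}=\mathrm{id}$ on each piece. An alternative is to prove injectivity and then compare cardinalities, but counts of $\oQ_M$ are not yet available here, so the explicit inverse is the route.

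The main obstacle is making the images of Cases~2.1, 2.2 and 2.3 provably disjoint, and ensuring they exhaust the non-Case~1 part of $\oQ_{M'}$. The conditions "$i-1\in D(\pi,i,k_i-1)$" and "$i-1\in C(\pi,i,k_i-1)$" must be translated into conditions on $\sigma=\Phi(\pi)$ about where the extra $(i-1)$ sits relative to the children of the outermost $i$-arc. I would first attempt this by describing each image purely through $S(\sigma,(i,i-1))$ together with whether a letter $<i-1$ separates specific consecutive occurrences, mirroring the argument used for Case~2.2 and Case~2.3 in Lemma~\ref{LEM:factor-keep}. If that is not enough to separate the subclasses, I would follow the Yan--Yang--Huang--Zhu inverse on $\theta(\sigma)$ and then reinsert the larger letters at the same $\phi$-anchors.
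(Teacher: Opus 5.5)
Your first half follows the paper: you reduce to letters $\le i$, which is legitimate because every larger letter has multiplicity one, and apply Corollary~\ref{COR:test-2} to the pair $(i,i-1)$. The second half has a genuine gap. The recognition criteria you propose for the images are wrong, so the inverse you describe is not actually defined on $\oQ_{M'}$.

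\textbf{Case~2.1.} You require the last $i-1$ to come after the last $i$. Take $M=\{1,2^2,3^2\}$, $i=3$, $\pi=22313$. This is Case~2.1 with $j_3<j_1<j_2$, and $\Phi(\pi)=22123$, whose last $2$ precedes its last $3$. Since $S(\Phi(\pi),(3,2))=2223$, it also matches neither of your descriptions of Cases~2.2 and~2.3. So your three classes do not even cover the Case~2 part of $\oQ_{M'}$.

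\textbf{Cases~2.2 and~2.3.} You give both the same shape $(i-1)^{k_{i-1}}i^{k_i-1}(i-1)$. The local refinement you suggest (a letter smaller than $i-1$ separating certain occurrences) cannot tell them apart. With $i=4$, $\Phi(4413324)=3324413$ comes from Case~2.3 and $\Phi(44133124)=33144123$ comes from Case~2.2. In both images a letter $<3$ lies between the second $3$ and the first $4$. The real difference is enclosure, not separation: in the second image the $1$-arc encloses both $4$'s inside the last $3$-arc.

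\textbf{Case~1.} Your rule is also misstated. In the paper's example $7232284165111222\mapsto 7232284165111122$ there is no $1$ after the last $2$ at all. The letter to restore is the last $i-1$ when $S(\sigma,(i,i-1))$ begins with $i$ (Case~1.1), and the $i-1$ immediately preceding the first $i$ when it begins with $i-1$ (Case~1.2).

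\textbf{The missing idea.} The Case~2 images are described by the \emph{same} trichotomy as the Case~2 domain, with $i$ and $i-1$ interchanged. The last $(i-1)$-arc of $\sigma$ plays the role of the last $i$-arc; it exists because $i-1$ has multiplicity $k_{i-1}+1\ge 2$ in $M'$. Reading off \eqref{EQ:form-case2.1}--\eqref{EQ:form-case2.3}, Cases~2.1, 2.2, 2.3 produce exactly the $\sigma$ with, respectively:
\begin{enumerate}
\item $i\notin D(\sigma,i-1,k_{i-1})$;
\item $i\in D(\sigma,i-1,k_{i-1})$ but $i\notin C(\sigma,i-1,k_{i-1})$;
\item $i\in C(\sigma,i-1,k_{i-1})$.
\end{enumerate}
These conditions trivially partition the Case~2 part of $\oQ_{M'}$. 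In each of them the inverse is the same $\phi$-anchored factor move with the roles of $i$ and $i-1$ swapped. This is exactly what the paper's proof relies on.

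Your fallback through the Yan--Yang--Huang--Zhu inverse does not bypass this. That map is a bijection on all of $\oQ$, so you would still have to show it carries the Case~2 part onto the Case~2 part, which is the same recognition problem.

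\textbf{Well-definedness.} The phrase ``the multiset of letters inside each block is unchanged'' hides where the case hypotheses enter. In Case~2.3, the block inside the last $(i-1)$-arc of $\Phi(\pi)$ places the old material after the last $i-1$ \emph{before} the old material preceding the first $i-1$. This block is quasi-Stirling only because $i-1\in C(\pi,i,k_i-1)$ makes those two pieces letter-disjoint.
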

\begin{proof}	
	We first show that $\Phi(\pi)\in \oQ_{M'}$ for any $\pi\in \oQ_M$. Fix $\pi$. Since $k_{i+1}=k_{i+2}=\cdots=k_n=1$, it suffices to prove that $\theta(\Phi(\pi))$ is a quasi-Stirling permutation. We first consider \textbf{Case 1}. By Lemma~\ref{LEM:factor-keep}, both $\theta(\pi)$ and $\theta(\Phi(\pi))$ contain the factor $i(i-1)$. This implies that the block sets $B(\theta(\pi),(i,i-1))$ and $B(\theta(\Phi(\pi)),(i,i-1))$ coincide. Corollary~\ref{COR:test-2} tells us that every sequence in $B(\theta(\pi),(i,i-1))$ is a quasi-Stirling permutation and that no two of these sequences share a common letter. Hence the same holds for every sequence in $B(\theta(\Phi(\pi)),(i,i-1))$. Moreover, the replacement rule ensures that the sequence $S(\theta(\Phi(\pi)),(i,i-1))$ is a quasi-Stirling permutation. Applying Corollary~\ref{COR:test-2} once more, we conclude that $\theta(\Phi(\pi))$ is a quasi-Stirling permutation, and consequently so is $\Phi(\pi)$. The argument for \textbf{Case 2} is analogous and is therefore omitted.
	
	
	Now we show that $\Phi$ is a bijection between $\oQ_M$ and $\oQ_{M'}$. Let $\bar{\pi}\in\oQ_{M'}$. Once we can determine from which case $\bar{\pi}$ is mapped, its preimage can be easily recovered according to the construction of $\Phi$, which completes the proof. By Lemma~\ref{LEM:factor-keep}, one can first distinguish whether $\bar{\pi}$ comes from \textbf{Case 1} or \textbf{Case 2} by testing whether $\theta(\bar{\pi})$ contains the factor $i(i-1)$. If $\bar{\pi}$ arises from \textbf{Case 1}, then its reverse is straightforward. For \textbf{Case 2}, we observe the following characterizations.
	
	\noindent\textbf{Case 2.1} yields $\Phi(\pi)$ such that $D(\Phi(\pi),i-1,k_{i-1})$ does not contain $i$, as described in~\eqref{EQ:form-case2.1}.
	
	\noindent\textbf{Case 2.2} leads to $\Phi(\pi)$ such that $i$ lies in $D(\Phi(\pi),i-1,k_{i-1})$ but not in $C(\Phi(\pi),i-1,k_{i-1})$, as described in~\eqref{EQ:form-case2.2}.
	
	\noindent\textbf{Case 2.3} presents $\Phi(\pi)$ such that $i$ lies in $C(\Phi(\pi),i-1,k_{i-1})$, as described in~\eqref{EQ:form-case2.3}.
\end{proof}

%
%
%

\begin{thm}\label{THM:bijections-run}
	Let $M=\{1^{k_1},2^{k_2},\ldots,n^{k_n}\}$ be a multiset with $n\geq 2$ and $k_1,k_2,\ldots,k_n\in\mathbb P$. 
	Suppose that there exists an integer $i\in[n]\setminus\{1\}$ such that 
	$k_i>k_{i+1}=k_{i+2}=\cdots=k_n=1$. 
	Let 
	$
	M'=\{1^{k_1},\ldots,(i-1)^{k_{i-1}+1},i^{k_i-1},\ldots,n^{k_n}\}.
	$
	Then there exists a bijection $\Phi$ between $\overline{Q}_M$ and $\overline{Q}_{M'}$ such that
	$
	\DR(\pi)=\DR(\Phi(\pi))
	$
	for any $\pi\in \overline{Q}_M$.
\end{thm}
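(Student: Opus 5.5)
Since the preceding lemma already shows that $\Phi$ is a bijection from $\oQ_M$ to $\oQ_{M'}$, it remains to prove $\DR(\pi)=\DR(\Phi(\pi))$ for every $\pi\in\oQ_M$. I would work throughout with the cut structure of a sequence. Write $\pi$ as its decreasing runs separated by ``cuts'', that is, positions $j$ with $\pi_j\le\pi_{j+1}$. Then $\DR(\pi)$ is determined by the multiset of run lengths. In every case $\Phi$ either relabels a single letter or cuts $\pi$ at positions immediately after $\pi_{\phi(\cdot)}$, permutes the resulting factors, and relabels one letter. So the plan is to compare run lengths locally at the relabeled letter and at each boundary where factors are cut and reglued.

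The first key observation concerns the cut points. Each factor $\alpha,\alpha_1,\alpha_2$ begins right after a letter $\pi_{\phi(j)}\le i$. It then continues through a block $\rho$ of letters greater than $i$ (all of multiplicity one, since $k_{i+1}=\cdots=k_n=1$) and ends at a letter $\le i$. I would prove a boundary lemma: if $\pi_{\phi(j)}=a\le i$ is followed by $\rho\,b$ with $\rho$ consisting of letters $>i$ and $b\le i$, then the run structure across the junction depends only on whether $\rho$ is empty. If $\rho$ is nonempty, there is always an ascent from $a$ into the first letter of $\rho$, so a run break occurs exactly there, regardless of $a$. If $\rho$ is empty, it depends on whether $a>b$. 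Hence moving a factor $\rho\cdots$ to a new left neighbor $a'\le i$ preserves run lengths whenever $\rho\neq()$. When $\rho=()$, I need to compare the relative order of the old and new neighbors with the first letter of the moved factor. This is precisely where the relabeling $i\to i-1$ enters: the only letters whose order relations change are $i$ and $i-1$, and in $\theta$ these become equal or swapped only at the specific occurrences involved.

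Next I would handle the cases one by one. In \textbf{Case 1}, only one letter changes from $i$ to $i-1$. A run changes only if this occurrence is adjacent to a letter $x$ with $i-1\le x\le i$, or to $x\in\{i-1,i\}$ after deleting letters $>i$ (letters $>i$ compare the same way with $i$ and $i-1$). Using the forms $i\cdots i(i-1)\cdots(i-1)i\cdots i$ and $(i-1)\cdots(i-1)i\cdots i(i-1)\cdots(i-1)$ of $S(\pi,(i,i-1))$ from the proof of Lemma~\ref{LEM:factor-keep}, I would check that the chosen $i$ (the first after the last $i-1$, or the first $i$ overall) has neighbors such that descent/non-descent status is unchanged. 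For instance, $i$ followed by $i$ is a plateau, and after relabeling it becomes $(i-1)\,i$, an ascent; both are cuts. Likewise $(i-1)\,i$ becomes $(i-1)(i-1)$, which is still a cut. The delicate adjacency $i\,(i-1)\to(i-1)(i-1)$, a descent turning into a plateau, must be shown not to occur: the rule is chosen so that the relabeled $i$ never immediately precedes an $i-1$. For \textbf{Cases 2.1--2.3}, I would use the explicit forms \eqref{EQ:form-case2.1}, \eqref{EQ:form-case2.2}, \eqref{EQ:form-case2.3} and list the at most three junctions. At each one, the boundary lemma together with the Case 2 hypothesis (no factor $i(i-1)$ in $\theta(\pi)$, and the $C/D$ conditions) shows that the run containing the junction is cut into the same lengths. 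Moving whole factors intact then permutes the runs, so the multiset of run lengths is preserved.

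The main obstacle will be the junctions with empty $\rho_j$ in Cases 2.2 and 2.3, where $i$ and $i-1$ are simultaneously swapped and the factors are exchanged. There I would check that every run formed across a junction in $\pi$ reappears, with the same length, across the corresponding junction in $\Phi(\pi)$. I would first try to show that each junction is always a cut in both permutations, because the letter after $\pi_{\phi(j)}$ is either in $\rho$ or is $i$ or $i-1$ preceded by a letter $\le i$ forming a non-descent. If that fails, I would instead pair the runs explicitly.
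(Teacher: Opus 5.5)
Your proposal is correct and follows essentially the paper's argument: the paper also relies on the cut points $\phi(\cdot)$ and the Case 2 hypothesis (no factor $i(i-1)$ in $\theta$, preserved by Lemma~\ref{LEM:factor-keep}) to make every junction where a factor is detached or reinserted an ascent or level in both $\pi$ and $\Phi(\pi)$, and then notes that the relabelings of $i$ and $i-1$ never change an adjacent comparison, so your primary plan of showing each junction is a cut in both permutations works and the fallback of pairing runs is never needed. Your local checks are more explicit than the paper's, but note two corrections: in Case 2.2 the swap relabels three letters rather than one, and in Case 1 you must also rule out the relabeled $i$ immediately following another $i$, which the choice of the relabeled $i$ does prevent.
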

\begin{proof}
	By definition, $\Phi(\pi)$ is obtained through at most two operations: 
	first, rearranging the positions of certain factors, and then replacing some occurrences of the letters $i$ and $i-1$ with each other. Therefore, it is sufficient to show that these operations preserve the distribution of decreasing runs.
	
	For the first operation, note that the map $\phi$ guarantees that, in both the original and the resulting permutations, the entries immediately preceding and following each moved factor form ascents or levels with the factor's endpoints. Hence, the first operation neither destroys nor creates any decreasing runs.
	
	The second operation also preserves the distribution of decreasing runs. 
	Indeed, by Lemma~\ref{LEM:factor-keep}, no replacement is ever performed inside a factor of the form $i(i-1)$. 
	Moreover, when forming a decreasing run, the letters $i$ and $i-1$ are indistinguishable from all other letters.
\end{proof}

Applying Equations~\eqref{EQ:p-DR} to the above theorem yields the following corollary immediately.

\begin{cor}\label{COR:bijections-pattern}
	Let $M=\{1^{k_1},2^{k_2},\ldots,n^{k_n}\}$ be a multiset with $n\geq2$ and $k_1,k_2,\ldots,k_n\in\bP$. Suppose that there exists an integer $i\in[n]\setminus\{1\}$ such that $k_{i}>k_{i+1}=k_{i+2}=\cdots=k_n=1$. Let $M'=\{1^{k_1},\ldots,(i-1)^{k_{i-1}+1},i^{k_{i}-1},\ldots,n^{k_n}\}$. Then there exists a bijection $\Phi$ between $\overline{Q}_M$ and $\overline{Q}_{M'}$ such that for any $\pi\in\oQ_M$, 
	\[(\underline{1},\underline{21},\ldots,\underline{n\cdots21})\pi=(\underline{1},\underline{21},\ldots,\underline{n\cdots21})\Phi(\pi).\]
\end{cor}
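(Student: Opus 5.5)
The corollary follows directly from Theorem~\ref{THM:bijections-run}. We take the same bijection $\Phi\colon\oQ_M\to\oQ_{M'}$ constructed there and convert equality of decreasing-run profiles into equality of pattern counts using \eqref{EQ:p-DR}.

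\textbf{Step 1: identify the pattern-count map.} For any multipermutation $\sigma$ whose alphabet is $[n]$, define the linear map $L$ sending $(m_1,\ldots,m_n)$ to the vector whose $s$-th coordinate is $\sum_{j=s}^{n}(j-s+1)m_j$, for $1\le s\le n$. Then \eqref{EQ:p-DR} reads
\[
(\underline{1},\underline{21},\ldots,\underline{n\cdots21})\sigma=L(\DR(\sigma)).
\]
This identity holds uniformly for every $\sigma\in S_M$ and every $\sigma\in S_{M'}$. The reason is that a decreasing run has strictly decreasing entries, so its length is at most the number $n$ of distinct letters. Moreover, a decreasing run of length $j$ contains exactly $j-s+1$ occurrences of $\underline{s\cdots21}$. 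Finally, every occurrence of $\underline{s\cdots21}$ is a strictly decreasing factor and therefore lies inside a unique maximal decreasing run. Since $M$ and $M'$ both have alphabet $[n]$ (note $k_i-1\ge1$, so $i$ still occurs in $M'$), the same $L$ applies to both.

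\textbf{Step 2: apply the theorem.} By Theorem~\ref{THM:bijections-run}, $\DR(\pi)=\DR(\Phi(\pi))$ for every $\pi\in\oQ_M$. Applying $L$ to both sides gives the required equality of tuples.

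The only point needing care is the uniformity claimed in Step 1: the decreasing-run vector must have the same ambient length $n$ for both multisets, and no decreasing run can exceed length $n$. Both facts are immediate from the observations above, so I expect no genuine obstacle.
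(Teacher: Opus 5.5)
Your proof is correct and matches the paper's argument: the paper obtains this corollary by applying the correspondence \eqref{EQ:p-DR} to the decreasing-run-preserving bijection $\Phi$ of Theorem~\ref{THM:bijections-run}. Your justification of \eqref{EQ:p-DR} is consistent with what the paper calls evident: decreasing runs are strictly decreasing, they partition the word, and a run of length $j$ contains $j-s+1$ occurrences of $\underline{s\cdots21}$.
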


As shown in Example~\ref{EX:Phi}, applying $\Phi$ for $\sum_{j=2}^i(j-1)(k_j-1)$ times, we finally obtain a bijection between $\oQ_M$ and $\oQ_{\{1^{K-n+1},2,\ldots,n\}}$ with $K=k_1+k_2+\cdots+k_n$ and thus prove Theorem~\ref{THM:bijections-run-pattern}.

\begin{exa}
	Following Example~\ref{EX:Phi}, one can verify that throughout the iterative application of $\Phi$, every resulting permutation $\pi'$ preserves the same distribution of decreasing runs as $\pi$. Specifically, $\DR(\pi)=\DR(\pi')=(6,2,2,0,0,0,0,0)$. The same holds for the number of occurrences of decreasing consecutive patterns, namely, $$(\underline{1},\underline{21},\ldots,\underline{8\cdots21})\pi=(\underline{1},\underline{21},\ldots,\underline{8\cdots21})\pi'=(16,6,2,0,0,0,0,0).$$
\end{exa}

\begin{rem}
	Even if we take $M'$ to be the multiset obtained from $M$ by merely permuting the multiplicities of the letters, the joint distribution of the patterns $\underline{321}$ and $\underline{11}$ over $\oQ_M$ and $\oQ_{M'}$ can differ. For example, let $M=\{1^2,2,3\}$ and $M'=\{1,2^2,3\}$. The permutation $\pi=3211\in\oQ_M$ satisfies $(\underline{321},\underline{11})\pi=(1,1)$, whereas there is no permutation $\pi'\in\oQ_{M'}$ such that $(\underline{321},\underline{11})\pi'=(1,1)$. The same phenomenon occurs for the patterns $\underline{321}$ and $\underline{12}$, since $\{\pi\in\oQ_M \mid (\underline{321},\underline{12})\pi=(1,1)\}=\{1321\}$, while $\{\pi\in\oQ_{M'} \mid (\underline{321},\underline{12})\pi=(1,1)\}=\{2321,3212\}$. In this sense, both the Yan-Yang-Huang-Zhu bijection in~\cite{YanYangHuangZhu2022} and ours are the best possible, as they preserve the joint distribution for as many patterns as possible.
\end{rem}

\section{Distribution Functions for Decreasing Statistics}\label{SEC:dis}

This section is devoted to the distribution functions for decreasing runs and decreasing consecutive patterns over quasi-Stirling permutations. Using the bijection from Section~\ref{SEC:bijections}, it suffices to study these distributions over permutations of a multiset in which only the letter $1$ may have multiplicity greater than one. These can be obtained via the well-known run theorem.

\subsection{Distribution for Decreasing Runs}\label{SEC:dis-DR}

In this section, we compute the distribution functions for decreasing runs over quasi-Stirling permutations. We present the associated recurrence relations and the corresponding exponential generating function, thereby proving Theorem~\ref{THM:dis-run}.

We begin by recalling a related elegant result for general multipermutations, known as the \textit{run theorem}. This result has been established by Gessel~\cite[Theorem~5.2]{Gessel1977}, by Jackson and Aleliunas~\cite[Theorem~4.1]{Jackson1977}, by Goulden and Jackson~\cite[Theorem~4.2.3]{Goulden1983}, and restated by Gessel and Zhuang~\cite[Theorem~11]{GesselZhuang2014}. With the shorthand notations $\vt:=(t_j)_{j\ge1}$ and $\vx:=(x_j)_{j\ge1}$, we define the generating function for the distribution of decreasing runs over all words with positive integer letters by
\[
\cR_M(\vt;\vx)
:= \sum_{n \ge 1} \sum_{k_1,k_2,\ldots,k_n \ge 0}
\left( \sum_{\substack{\pi \in S_{M}\\ M=\{1^{k_1}, 2^{k_2}, \ldots, n^{k_n}\}}} \mathbf{t}^{\DR(\pi)} \right)
x_1^{k_1} x_2^{k_2} \cdots x_n^{k_n},
\]
where $\mathbf{t}^{\DR(\pi)} = t_1^{m_1} t_2^{m_2} \cdots t_n^{m_n}$ whenever $ \DR(\pi) = (m_1,m_2,\ldots,m_n) $. The run theorem then provides a complete characterization of this generating function.

\begin{lem}[Run Theorem]
	The generating function $ \cR_M(\vt;\vx) $ admits the explicit form
	\begin{align}\label{EQ:GDR}
	\cR_M(\vt;\vx)
	= \left( 1 -\sum_{n=1}^{\infty} W_n(\vt) e_n(\vx) \right)^{-1},
	\end{align}
	where $ e_n(\vx) $ denotes the elementary symmetric function of degree $ n $ in the variables $ x_1, x_2, \ldots $, and $ W_n(\vt) $ is the function of $ t_1, t_2, \ldots, t_n $ determined by the expansion~\eqref{EQ:W_n}.
\end{lem}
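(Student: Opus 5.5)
The plan is to prove \eqref{EQ:GDR} by a direct factorization of words into strictly decreasing blocks, with no inclusion--exclusion. Two conventions are used throughout. First, the constant term $1$ on the right-hand side is accounted for by the empty word. Second, a word in the letters $1,2,\ldots$ with $x$-weight $x_1^{k_1}x_2^{k_2}\cdots$ is the same thing as a permutation of $M=\{1^{k_1},2^{k_2},\ldots\}$, so the left-hand side is the generating function $\sum_{w}\mathbf{t}^{\DR(w)}\vx^{w}$ over all words $w$.

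First, a decreasing run is a maximal factor with no ascents and no plateaux, so it is \emph{strictly} decreasing. A strictly decreasing word of length $m$ is determined by its set of letters, which is an $m$-subset of $\bP$. Hence the generating function of strictly decreasing words of length $m$ is exactly $e_m(\vx)$.

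Next, introduce formal indeterminates $u_1,u_2,\ldots$ and consider pairs $(w,\mathcal{D})$. Here $w$ is a word and $\mathcal{D}$ is a factorization of $w$ into consecutive nonempty strictly decreasing blocks, not necessarily maximal. Such a pair is weighted by $\vx^{w}\prod_{\text{blocks}}u_{\text{length}}$. Since a pair is just an arbitrary sequence of strictly decreasing blocks, the total generating function is
\[
\sum_{r\ge0}\Bigl(\sum_{n\ge1}u_n e_n(\vx)\Bigr)^{r}=\Bigl(1-\sum_{n\ge1}u_ne_n(\vx)\Bigr)^{-1}.
\]
Now group the pairs according to the underlying word $w$. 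The key observation is the following. Every boundary between two maximal decreasing runs of $w$ is a weak ascent $w_j\le w_{j+1}$, so it must be a block boundary of $\mathcal{D}$. Conversely, any set of cut points inside the maximal runs yields a valid $\mathcal{D}$. Thus the admissible factorizations of $w$ are exactly the independent choices of a composition of each maximal run length. A maximal run of length $m$ therefore contributes the factor $c_m:=\sum_{(a_1,\ldots,a_r)\models m}u_{a_1}\cdots u_{a_r}$, and
\[
\Bigl(1-\sum_{n\ge1}u_ne_n(\vx)\Bigr)^{-1}=\sum_{w}\vx^{w}\prod_{m}c_m^{\,m_m(w)},
\]
where $m_m(w)$ denotes the number of decreasing runs of length $m$ in $w$.

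Finally, $1+\sum_{m\ge1}c_mz^m=\bigl(1-\sum_{n\ge1}u_nz^n\bigr)^{-1}$. Choosing $u_n=W_n(\vt)$, the defining relation \eqref{EQ:W_n} gives $1+\sum_m c_m z^m=1+\sum_m t_m z^m$, so $c_m=t_m$. Substituting this into the previous display yields \eqref{EQ:GDR}. The substitution is legitimate because $W_n(\vt)$ is a polynomial in $t_1,\ldots,t_n$ and all identities hold in the ring of formal power series in $\vx$.

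The only point I expect to require care is the bijective claim that factorizations of $w$ into strictly decreasing blocks correspond exactly to independent compositions of the maximal runs. This rests on two facts: cuts are forced at every weak ascent, and any sub-factor of a strictly decreasing factor is again strictly decreasing. I would state it as a short lemma before doing the series manipulation.
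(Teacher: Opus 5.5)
Your proof is correct. The paper does not prove this lemma itself. It imports the run theorem from Gessel's thesis, Jackson--Aleliunas, Goulden--Jackson and Gessel--Zhuang and uses it as a black box. Your argument is a self-contained version of the standard proof behind those references, and it checks out step by step:
\begin{itemize}
\item Pairs consisting of a word and an arbitrary factorization into strictly decreasing blocks are just sequences of blocks. This gives $\bigl(1-\sum_{n\ge1}u_ne_n(\vx)\bigr)^{-1}$.
\item For a fixed word, these factorizations are exactly independent compositions of its maximal runs, since cuts are forced at every weak ascent and free inside a run.
\item The defining relation for $W_n(\vt)$ says precisely that the composition sum $c_m$ collapses to $t_m$ when $u_n=W_n(\vt)$. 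The signs in $W_n(\vt)$ are where the inclusion--exclusion of other presentations is hidden.
\end{itemize}

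What your route buys is transparency. It explains why the weights $W_n(\vt)$ are defined by inverting $1+\sum_{n\ge1}t_nz^n$. It also needs nothing beyond the free-monoid structure of block sequences and the fact that strictly decreasing words of length $m$ are enumerated by $e_m(\vx)$. The citation buys brevity and places the lemma inside the more general frameworks of those sources.

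Your explicit convention is also the right one: the left-hand side is the sum over all words, with the empty word supplying the constant term $1$. Read literally, the paper's definition of $\cR_M(\vt;\vx)$ as a sum over $n\ge1$ and $k_1,\ldots,k_n\ge0$ would count each multiset infinitely often, once for every $n$ not smaller than its largest letter. So the identity only makes sense under your reading.
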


In this paper, we are concerned with the distribution of decreasing runs over multipermutations of the multiset $\{1^k,2,\ldots,n\}$. Recall that we denote the corresponding distribution function by $\ccr(\vt;n,k)$ and it is indeed the coefficient of $x_1^k x_2 \cdots x_n$ in $\cR_M(\vt;\vx)$. Using symmetric function techniques from~\cite{Yang2020}, we derive the following recurrence relation.

\begin{prop}\label{PROP:rec-dr}
	Let $W_n(\vt)$ denote the function of $t_1,t_2,\ldots,t_n$ determined by the expansion~\eqref{EQ:W_n}. Then for any $n\geq1$ and $k\geq1$, we have
	\begin{align}\label{EQ:rec-dr}
	\ccr(\vt;n,k)=\sum_{m=1}^{n}W_m(\vt)\left(\binom{n-1}{m}\ccr(\vt;n-m,k)+\binom{n-1}{m-1}\ccr(\vt;n-m+1,k-1)\right).
	\end{align}
	Moreover, $\ccr(\vt;n,0)=\ccr(\vt;n-1,1)$ for $n\geq2$, with the initial conditions $\ccr(\vt;1,0)=1$ and $\ccr(\vt;n,k)=0$ whenever $n\leq 0$ or $k< 0$.
\end{prop}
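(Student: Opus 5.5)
Starting from the Run Theorem, write $F(\vx)=\cR_M(\vt;\vx)=(1-E(\vx))^{-1}$ with $E(\vx)=\sum_{m\ge1}W_m(\vt)e_m(\vx)$. Then $F=1+E\cdot F$. We extract the coefficient of $x_1^k x_2\cdots x_n$ on both sides. By symmetry of $F$ in the variables, the coefficient of any monomial $x_{a}^{k}x_{b_1}\cdots x_{b_{n-1}}$ with distinct indices equals $\ccr(\vt;n,k)$ (for $k\ge1$). Since $k\ge1$ the monomial is nonconstant, and therefore $\ccr(\vt;n,k)=[x_1^kx_2\cdots x_n]\,(E\cdot F)$.

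A monomial of $e_m$ is a product of $m$ distinct variables, each to the first power. We choose a term $W_m e_m$ and the squarefree monomial $x_S$, $|S|=m$, and pair it with the complementary monomial of $F$. There are two cases.
\begin{enumerate}
\item If $1\notin S$, then $S\subseteq\{2,\ldots,n\}$ can be chosen in $\binom{n-1}{m}$ ways. The complement is $x_1^k\prod_{j\in[2,n]\setminus S}x_j$, whose coefficient in $F$ is $\ccr(\vt;n-m,k)$ by symmetry, after relabeling the letters order-preservingly. This relabeling is legitimate because $\DR$ depends only on relative order.
\item If $1\in S$, the remaining $m-1$ elements of $S$ are chosen from $\{2,\ldots,n\}$ in $\binom{n-1}{m-1}$ ways. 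The complement is $x_1^{k-1}$ times $n-m$ other distinct variables, with coefficient $\ccr(\vt;n-m+1,k-1)$.
\end{enumerate}
Summing over $m$ gives \eqref{EQ:rec-dr}.

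The step needing care is the boundary cases, where the complementary monomial degenerates. When $k-1=0$, the coefficient is $[x_1^0x_2\cdots x_{n-m+1}]F$. This is the count of permutations of $n-m$ distinct letters, which should equal $\ccr(\vt;n-m+1,0)$ under the convention that $\ccr(\vt;n,0)$ means the multiset $\{2,\ldots,n\}$, i.e.\ $n-1$ distinct letters. The empty monomial corresponds to $n-m+1=1$, $k-1=0$, with coefficient $1$ from $F$'s constant term, which matches $\ccr(\vt;1,0)=1$. Similarly, when $1\notin S$ and $m=n$, the required set has size $n>n-1$, so the term vanishes via $\binom{n-1}{n}=0$. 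This is consistent with $\ccr(\vt;0,k)=0$.

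Finally, for $n\ge2$ the identity $\ccr(\vt;n,0)=\ccr(\vt;n-1,1)$ holds. Both sides count permutations of $n-1$ distinct letters: $\{2,\ldots,n\}$ on the left and $\{1,2,\ldots,n-1\}$ on the right. Relabeling order-preservingly preserves $\DR$. The initial conditions are direct: $\ccr(\vt;1,0)$ counts the empty word, and negative or nonpositive indices give empty sets. I expect the only real obstacle to be checking these degenerate conventions consistently inside the recurrence.
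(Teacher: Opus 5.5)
Your proposal is correct and follows essentially the paper's route. The paper also multiplies the run theorem identity by $1-\sum_m W_m(\vt)e_m(\vx)$, compares coefficients of $x_1^{k_1}\cdots x_n^{k_n}$ by summing over $0$-$1$ vectors $\vv$ with $m$ ones, and then specializes to $\vk=(k,1,\ldots,1)$ via symmetry, which is exactly your split according to whether $1\in S$. Your explicit handling of the degenerate terms (the case $k-1=0$, the vanishing $\binom{n-1}{n}$ term) and of $\ccr(\vt;n,0)=\ccr(\vt;n-1,1)$ by order-preserving relabeling simply fills in details the paper leaves implicit.
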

\begin{proof}
	Denote by $\cc(\vt;\vk)$ the coefficient of the monomial $x_1^{k_1}x_2^{k_2}\cdots x_n^{k_n}$ in $\cR_M(\vt;\vx)$, where $\vk$ is the vector $(k_1,k_2,\ldots,k_n)$. Multiplying both sides of Equation~\eqref{EQ:GDR} by $ 1 -\sum_{n=1}^{\infty} W_n(\vt) e_n(\vx) $, and then comparing the coefficients of the monomial $x_1^{k_1}x_2^{k_2}\cdots x_n^{k_n}$ on each side, we obtain 
	\begin{align}\label{EQ:gen-rec}
	\cc(\vt;\vk)-\left(\sum_{m=1}^{n}W_m(\vt)\sum_{\vv\in V_{n,m}}\cc(\vt;\vk-\vv)\right)=0
	\end{align}
	for $n\geq1$, where $V_{n,m}$ is the set of $0$-$1$ vectors of length $n$ with exactly $m$ 1's. Furthermore, the function $\cR_M(\vt;\vx)$ is symmetric in $\vx$ by virtue of~\eqref{EQ:GDR}, which implies that $\cc(\vt;\vk)=\cc(\vt;\vk')$ for any $\vk'$ obtained from $\vk$ by permuting its entries. Therefore, Equation~\eqref{EQ:gen-rec} can be further simplified. Specializing to the case $\vk=(k,1,\ldots,1)$ yields the desired recurrence relation~\eqref{EQ:rec-dr}.
\end{proof}

The recurrence relation~\eqref{EQ:rec-dr} allows us to derive the exponential generating function $\cR(\vt;x,y)$ of the sequence $\ccr(\vt;n,k)$, see~\eqref{EQ:def-cR} for the definition of $\cR(\vt;x,y)$. 
To this end, we first recall that Gessel and Zhuang~\cite{GesselZhuang2014} have shown that applying the homomorphism $e_n(\vx)\mapsto x^n/(n!)$ to~\eqref{EQ:GDR} yields the following equation for the exponential generating function $\cW(\vt;x)$ of $W_n(\vt)$.


\begin{lem}[\cite{GesselZhuang2014}, Lemma 9]\label{LEM:EG-constant}
	The exponential generating function for the distribution of decreasing runs over classical permutations is given by
	\[
	1+\sum_{n=1}^{\infty}\frac{\ccr(\vt;n,1)}{n!}x^n=\frac{1}{1-\cW(\vt;x)}.
	\]
\end{lem}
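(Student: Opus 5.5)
The plan is to obtain the lemma by specializing the run theorem \eqref{EQ:GDR} along the homomorphism $e_n(\vx)\mapsto x^n/n!$, which sends symmetric functions to exponential generating functions. The relevant coefficient is that of $x_1x_2\cdots x_m$ in $\cR_M(\vt;\vx)$, because that coefficient is exactly $\ccr(\vt;m,1)$, the decreasing-run distribution over the ordinary permutations of $[m]$. The bridge between the two is the standard fact that the linear map sending a symmetric function $f$ to $\sum_m [x_1\cdots x_m]f\cdot x^m/m!$ is a ring homomorphism on symmetric functions and sends $e_n$ to $x^n/n!$. I would carry out the following steps.

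First, I will check that this map is multiplicative. For symmetric $f,g$, the coefficient of $x_1\cdots x_m$ in $fg$ is a sum over subsets $S\subseteq[m]$ of products of square-free coefficients. By symmetry, each such product depends only on $|S|$ and equals $[x_1\cdots x_{|S|}]f\cdot[x_1\cdots x_{m-|S|}]g$. Summing over subsets gives binomial convolution, which is precisely multiplication of exponential generating functions. I will also check that $[x_1\cdots x_m]e_n=\delta_{mn}$, so $e_n\mapsto x^n/n!$.

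Second, I will apply the homomorphism to the identity $\cR_M(\vt;\vx)\cdot\bigl(1-\sum_n W_n(\vt)e_n(\vx)\bigr)=1$. I work with this product form rather than with the inverse directly, so that everything stays inside formal power series graded by degree, and the homomorphism extends to the degree-completed ring. The left factor maps to $1+\sum_{m\ge1}\ccr(\vt;m,1)x^m/m!$; the constant term $1$ accounts for the empty word, for which the convention in $\cR_M$ must be fixed. The right factor maps to $1-\cW(\vt;x)$. Dividing gives the claimed formula.

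The only delicate step is the bookkeeping. I must confirm that the empty word contributes the constant $1$ in \eqref{EQ:GDR}, which it must since the right-hand side has constant term $1$. I must also confirm that extracting square-free coefficients commutes with the infinite sums, which holds degree by degree because each homogeneous component is a finite sum. Alternatively, I could cite \cite{GesselZhuang2014} directly, since they perform exactly this specialization.
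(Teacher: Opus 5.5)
Your proposal is correct and takes the same route as the paper. The paper cites \cite{GesselZhuang2014} and obtains the lemma by applying the homomorphism $e_n(\vx)\mapsto x^n/n!$ to the run theorem \eqref{EQ:GDR}; your check that extracting the coefficient of $x_1\cdots x_m$ is multiplicative on symmetric functions (binomial convolution over subsets $S\subseteq[m]$), together with the constant-term bookkeeping, supplies the justification the paper leaves implicit.
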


Now we are ready to compute $\cR(\vt;x,y)$ by Proposition~\ref{PROP:rec-dr} and Lemma~\ref{LEM:EG-constant}.

\begin{prop}\label{PROP:cR}
	The exponential generating function $\cR(\vt;x,y)$ has the explicit form
	\begin{align}\label{EQ:cR}
	\cR(\vt;x,y)=\frac{x}{1-\cW(\vt;x)-y\cdot \cW'(\vt;x)},
	\end{align}
	where the prime denotes differentiation with respect to $x$.
\end{prop}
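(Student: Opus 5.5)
Throughout, $\ccr$, $\cW$ and $\cW'$ are abbreviated by suppressing $\vt$. The plan is to turn the recurrence~\eqref{EQ:rec-dr} into a linear equation for $\cR(\vt;x,y)$. The normalisation $1/(n-1)!$ in~\eqref{EQ:def-cR} is designed for this: the binomial factors $\binom{n-1}{m}$ and $\binom{n-1}{m-1}$ in~\eqref{EQ:rec-dr} become products of exponential generating functions. The $k=0$ terms will be handled separately with the relation $\ccr(\vt;n,0)=\ccr(\vt;n-1,1)$.

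\textbf{Step 1: the $k\ge1$ part.} Define $G(x,y)=\sum_{n\ge1,k\ge1}\ccr(n,k)x^{n-1}y^k/(n-1)!$ and $F_0(x)=\sum_{n\ge1}\ccr(n,0)x^{n-1}/(n-1)!$, so that $\cR=x(F_0+G)$. Multiply~\eqref{EQ:rec-dr} by $x^{n-1}y^k/(n-1)!$ and sum over $n\ge1$, $k\ge1$.

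For the second summand, use $\binom{n-1}{m-1}/(n-1)!=\frac{1}{(m-1)!\,(n-m)!}$. Summing over $n$ then gives $y\,\cW'(x)\,(F_0+G)$. Here $\cW'(x)=\sum_m W_m x^{m-1}/(m-1)!$ is paired with $\sum\ccr(n',k-1)x^{n'-1}/(n'-1)!$, where $n'=n-m+1$.

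For the first summand, use $\binom{n-1}{m}/(n-1)!=\frac{1}{m!\,(n-1-m)!}$. This gives $\cW(x)\cdot G$, because $\ccr(n-m,k)$ with $n-m\ge1$ carries $x^{n-m-1}/(n-m-1)!$. The terms with $n-m=0$ vanish by the initial conditions.

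Hence $G=\cW G+y\cW'(F_0+G)$.

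\textbf{Step 2: the $k=0$ part.} From $\ccr(1,0)=1$ and $\ccr(n,0)=\ccr(n-1,1)$ we get
\[
F_0(x)=1+\sum_{n\ge2}\frac{\ccr(n-1,1)}{(n-1)!}x^{n-1}=\frac{1}{1-\cW(x)},
\]
using Lemma~\ref{LEM:EG-constant}.

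\textbf{Step 3: solve.} Put $H=F_0+G$. Step 1 gives $H-F_0=\cW(H-F_0)+y\cW' H$, that is,
\[
H\bigl(1-\cW-y\cW'\bigr)=F_0(1-\cW)=1.
\]
Therefore $\cR=xH=x/(1-\cW-y\cW')$, which is~\eqref{EQ:cR}.

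\textbf{Main obstacle.} The delicate point is the index bookkeeping. One must check that the boundary terms in Step 1 (the cases $n-m=0$ and $k-1=0$) are exactly accounted for by the initial conditions. In particular, the terms with $k-1=0$ must bring in $F_0$, and not some truncated version of it. One should also confirm that~\eqref{EQ:rec-dr} at $n=1$, namely $\ccr(1,k)=W_1\ccr(1,k-1)$, is consistent with the formula: both sides give $\ccr(1,k)=t_1$ for $k\ge1$.
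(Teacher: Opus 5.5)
Your proof is correct and follows essentially the same route as the paper. The paper likewise gets the $y$-constant term $x/(1-\cW(\vt;x))$ from $\ccr(\vt;n,0)=\ccr(\vt;n-1,1)$ and Lemma~\ref{LEM:EG-constant}, and then checks $(\cW-1)(\cR-\cR(\vt;x,0))+y\cW'\cR=0$ coefficientwise from \eqref{EQ:rec-dr}; that identity is your Step~1 equation multiplied by $x$. One small slip, only in your closing sanity check: the recurrence at $n=1$ gives $\ccr(\vt;1,k)=t_1^k$, not $t_1$, and $t_1^k$ is also what the formula yields.
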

\begin{proof}
	The constant term of $\cR$ in $y$ is
	\begin{align}\label{EQ:EG-constant}
	\cR(\vt;x,0)&=\sum_{n=1}^{\infty}\frac{\ccr(\vt;n,0)}{(n-1)!}x^n
	=x+\sum_{n=2}^{\infty}\frac{\ccr(\vt;n-1,1)}{(n-1)!}x^n\nonumber\\
	&=x+\sum_{n=1}^{\infty}\frac{\ccr(\vt;n,1)}{n!}x^{n+1}
	=\frac{x}{1-\cW(\vt;x)},
	\end{align}
	where the last equality follows from Lemma~\ref{LEM:EG-constant}. A direct computation then gives
	\begin{align}\label{EQ:EG-other}
	&\left(\cW(\vt;x)-1\right)\left(\cR(\vt;x,y)-\cR(\vt;x,0)\right)+y\cdot \cW'(\vt;x)\,\cR(\vt;x,y)\\
	=\,&\left(\sum_{n=1}^{\infty}\frac{W_n(\vt)}{n!}x^n-1\right)\sum_{n\geq1,k\geq1}\frac{\ccr(\vt;n,k)}{(n-1)!}x^ny^k+\sum_{n=1}^{\infty}\frac{W_n(\vt)}{(n-1)!}x^{n-1}y\sum_{n\geq1,k\geq0}\frac{\ccr(\vt;n,k)}{(n-1)!}x^ny^k\nonumber\\
	=\,&\sum_{n\geq1,k\geq1}\left(\sum_{m=1}^{n}W_m(\vt)\left(\frac{\ccr(\vt;n-m,k)}{m!(n-m-1)!}+\frac{\ccr(\vt;n-m+1,k-1)}{(m-1)!(n-m)!}\right)-\frac{\ccr(\vt;n,k)}{(n-1)!}\right)x^ny^k=0,\nonumber
	\end{align}
	where the last equality uses Equation~\eqref{EQ:rec-dr}. Substituting~\eqref{EQ:EG-constant} to~\eqref{EQ:EG-other} yields the desired result.
\end{proof}

\begin{exa}
	We now compute the functions $\ccr(\vt;n,k)$ for several special values of $n$ and $k$ using Propositions~\ref{PROP:rec-dr} and~\ref{PROP:cR}.
	\begin{enumerate}
		\item For $n=1$, the recurrence relation~\eqref{EQ:rec-dr} gives
		\[
		\ccr(\vt;1,k)=W_1(\vt)\,\ccr(\vt;1,k-1)=W_1^k(\vt)\ccr(\vt;1,0)=W_1^k(\vt).
		\]
		Since $W_1(\vt)=t_1$ from~\eqref{EQ:W_n}, we obtain $\ccr(\vt;1,k)=t_1^k$, which corresponds to the unique multipermutation of the multiset $\{1^k\}$.
		
		\item For $k=1$, the recurrence relation~\eqref{EQ:rec-dr} reduces to
		\begin{align*}
		\ccr(\vt;n,1)
		&=W_n(\vt)+\sum_{m=1}^{n-1}W_m(\vt)\left(\binom{n-1}{m}\ccr(\vt;n-m,1)+\binom{n-1}{m-1}\ccr(\vt;n-m,1)\right)\\
		&=W_n(\vt)+\sum_{m=1}^{n-1}W_m(\vt)\binom{n}{m}\ccr(\vt;n-m,1).
		\end{align*}
		Note that this can also be obtained from Lemma~\ref{LEM:EG-constant}. Iterating it yields
		\begin{align*}
		\begin{array}{llll}
		&\ccr(\vt;1,1)=t_1,\qquad\qquad\qquad
		&\ccr(\vt;2,1)=t_1^2 + t_2,\\
		&\ccr(\vt;3,1)=t_1^3 + 4 t_1 t_2 + t_3,
		&\ccr(\vt;4,1)=t_1^4 + 11 t_1^2 t_2 + 6 t_1 t_3 + 5 t_2^2 + t_4,\ \cdots\\
		\end{array}
		\end{align*}
		which give the distributions of decreasing runs in classical permutations. 
		
		\item Finally, we compute $\ccr(\vt;n,n+1)$ for $n=1,2,3$ by expanding the left-hand side of~\eqref{EQ:cR}. From~\eqref{EQ:W_n}, we have $W_1(\vt)=t_1$, $W_2(\vt)=-t_1^2+t_2$ and $W_3(\vt)=t_1^3 - 2t_1t_2 + t_3$. Then we get
		\[\cW(\vt;x)=t_1x+\frac{-t_1^2+t_2}{2}x^2+\frac{t_1^3 - 2t_1t_2 + t_3}{6}x^3+\cdots\]
		and thus
		\begin{align*}
		&\cR(\vt;x,y)=\frac{x}{1-\cW(\vt;x)-y\cdot \cW'(\vt;x)}\\
		=\,&\frac{x}{1-\left(t_1x+\frac{-t_1^2+t_2}{2}x^2+\cdots\right)-y \left(t_1+\left(-t_1^2+t_2\right)x+\frac{t_1^3 - 2t_1t_2 + t_3}{2}x^2+\cdots\right)}\\
		=\,&x\sum_{m=0}^{6}\left(t_1y+\left(t_1-t_1^2y+t_2y\right)x+\left(-\frac{1}{2}t_1^2+\frac{1}{2}t_2+\frac{1}{2}t_1^3y-t_1t_2y+\frac{1}{2}t_3y\right)x^2\right)^m+\cdots\\
		=\,&\cdots+t_1^2xy^2+\cdots+\left(t_1^4 + 3t_1^2t_2\right)x^2y^3+\cdots+\frac{t_1^6 + 13t_1^4t_2 + 4t_1^3t_3 + 12t_1^2t_2^2}{2!}x^3y^4+\cdots.
		\end{align*}
		
		As shown in Theorem~\ref{THM:dis-run}, $\ccr(\vt;n,n+1)$ indeed gives the distribution of decreasing runs in quasi-Stirling permutations of the multiset $\{1^2,2^2,\ldots,n^2\}$.
	\end{enumerate}
\end{exa}

Combining Theorem~\ref{THM:bijections-run-pattern} with Propositions~\ref{PROP:rec-dr} and~\ref{PROP:cR} completes the proof of Theorem~\ref{THM:dis-run}.

\subsection{Distribution for Decreasing Consecutive Patterns}\label{SEC:dis-p}

This section presents the recurrence relations and generating functions for the distribution functions of decreasing consecutive patterns over quasi-Stirling permutations, including joint patterns, single patterns and pattern avoidance. While these results are direct consequences of those for decreasing runs in Section~\ref{SEC:dis-DR}, we record them here, which reproves some known results and yields Corollary~\ref{COR:most-p} as a further consequence.

We first clarify the precise definitions of these distribution functions. For a multiset $M=\{1^{k_1},2^{k_2},\ldots,n^{k_n}\}$, let $\mathcal{p}(\vt;\oQ_M)$ denote the joint distribution function of decreasing consecutive patterns over $\oQ_M$, defined by
\[\cp(\vt;\oQ_M):=\sum_{\pi\in \oQ_M} t_1^{(\underline{1})\pi}t_2^{(\underline{21})\pi}\cdots t_n^{(\underline{n\cdots 21})\pi}. \]
In particular, for every positive integer $s$, let $\cp_s(t;\oQ_M)$ denote the distribution function of the pattern $\underline{s\cdots 21}$ over $\oQ_M$. In other words, $\cp_s(t;\oQ_M)$ is obtained from $\cp(\vt;\oQ_M)$ by setting $t_s=t$ and $t_j=1$ for all $j\neq s$. By Theorem~\ref{THM:bijections-run-pattern}, it suffices to consider the corresponding functions on $S_{M'}$, where $M'$ is the multiset $\{1^{k},2,\ldots,n\}$. To this end, define 
\[\cp(\vt;n,k):=\sum_{\pi\in S_{M'}} t_1^{(\underline{1})\pi}t_2^{(\underline{21})\pi}\cdots t_n^{(\underline{n\cdots 21})\pi}\quad\text{and}\quad 
\cp_s(t;n,k):=\cp(1,\ldots,\mkern-3mu\underset{\underset{s\text{-th}}{\uparrow}}{t}\mkern-4mu,1,\ldots;n,k). \]
Let $\cP(\vt;x,y)$ and $\cP_s(t,x,y)$ denote the exponential generating functions of $\cp(\vt;n,k)$ and $\cp_s(t;n,k)$ respectively, given by
\[\cP(\vt;x,y):=\sum_{n\geq1,k\geq0}\frac{\cp(\vt;n,k)}{(n-1)!}x^n y^k
\quad\text{and}\quad
\cP_s(t,x,y):=\sum_{n\geq1,k\geq0}\frac{\cp_s(t;n,k)}{(n-1)!}x^n y^k.\]
Then the recurrence relation and exponential generating function for $\cp(\vt;n,k)$ follow directly from Theorem~\ref{THM:dis-run} by replacing each $t_j$ with $t_1^j t_2^{j-1}\cdots t_j$ for all positive integers $j$.

\begin{thm}\label{THM:enum-joint-pattern}
	Let $M=\{1^{k_1},2^{k_2},\ldots,n^{k_n}\}$ be a multiset with $n\geq 1$ and $k_1,k_2,\ldots,k_n\in\mathbb P$. Let 
	$K=k_1+k_2+\cdots +k_n$. Then we have
	\[
	\cp(\vt;\oQ_M)=\cp(\vt;n,K-n+1).
	\]
	Furthermore, let $ \bar{W}_n(\vt) $ be the function of $ t_1, t_2, \ldots, t_n $ determined by the expansion
	\begin{align}\label{EQ:barW_n}
	1 -\sum_{n=1}^{\infty} \bar{W}_n(\vt) z^n
	= \left( 1 + \sum_{n=1}^{\infty} t_1^nt_2^{n-1}\cdots t_n z^n \right)^{-1}.
	\end{align}
	Then $\cp(\vt;n,k)$ satisfies the recurrence relations
	\begin{align*}
	\cp(\vt;n,k)=
	\sum_{m=1}^{n}\bar{W}_m(\vt)\left(\binom{n-1}{m}\cp(\vt;n-m,k)+\binom{n-1}{m-1}\cp(\vt;n-m+1,k-1)\right)
	\end{align*}
	for $n\geq1$ and $k\geq1$, together with $\cp(\vt;n,0)=\cp(\vt;n-1,1)$ for $n\geq2$,
	and initial conditions $\cp(\vt;1,0)=1$ and $\cp(\vt;n,k)=0$ whenever $n\leq0$ or $k<0$.
	Let $\bar{\cW}(\vt;x)$ be the exponential generating function of $\bar{W}_n(\vt)$ defined as
	$\bar{\cW}(\vt;x):=\sum_{n=1}^{\infty}\bar{W}_n(\vt)x^n/n!$.
	Then the exponential generating function of $\cp(\vt;n,k)$ is given by
	\begin{align*}
	\cP(\vt;x,y)=\frac{x}{1-\bar{\cW}(\vt;x)-y\cdot \bar{\cW}'(\vt;x)},
	\end{align*}
	where the prime denotes differentiation with respect to $x$.
\end{thm}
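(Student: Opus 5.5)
The plan is to deduce everything from Theorem~\ref{THM:dis-run}, using the substitution $t_j\mapsto t_1^jt_2^{j-1}\cdots t_j$, i.e.\ $t_j\mapsto\prod_{s=1}^{j}t_s^{\,j-s+1}$. The first step is to check that this substitution turns the run weight into the pattern weight. By \eqref{EQ:p-DR}, if $\DR(\pi)=(m_1,\ldots,m_n)$, then
\[
t_1^{(\underline{1})\pi}t_2^{(\underline{21})\pi}\cdots t_n^{(\underline{n\cdots21})\pi}=\prod_{s=1}^n t_s^{\sum_{j\ge s}(j-s+1)m_j}=\prod_{j=1}^n\Bigl(\prod_{s=1}^{j}t_s^{\,j-s+1}\Bigr)^{m_j}.
\]
This is exactly $\mathbf t^{\DR(\pi)}$ after the substitution. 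Identity \eqref{EQ:p-DR} holds for any word, not only quasi-Stirling ones, so summing over $\oQ_M$ and over $S_{\{1^k,2,\ldots,n\}}$ gives $\cp(\vt;\oQ_M)=\ccr(\vt;\oQ_M)|_{\rm sub}$ and $\cp(\vt;n,k)=\ccr(\vt;n,k)|_{\rm sub}$.

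The second step combines this with the first part of Theorem~\ref{THM:dis-run}, namely $\ccr(\vt;\oQ_M)=\ccr(\vt;n,K-n+1)$. It gives $\cp(\vt;\oQ_M)=\cp(\vt;n,K-n+1)$. Alternatively, this follows directly from Theorem~\ref{THM:bijections-run-pattern}, since the bijection $\Psi$ preserves the tuple of pattern counts.

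The third step handles the recurrence. The only place $\vt$ enters the recurrence of Proposition~\ref{PROP:rec-dr} is through $W_m(\vt)$, which is defined by the formal power series identity \eqref{EQ:W_n}. Applying the substitution to \eqref{EQ:W_n} yields \eqref{EQ:barW_n}, so $W_m(\vt)|_{\rm sub}=\bar W_m(\vt)$. The substitution is a ring homomorphism of polynomial rings, so it commutes with the finite sums in the recurrence. The same holds for the relation $\ccr(\vt;n,0)=\ccr(\vt;n-1,1)$ and for the initial conditions. The recurrence for $\cp(\vt;n,k)$ therefore follows termwise.

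The last step transfers the generating function. Apply the substitution coefficientwise in $x,y$ to \eqref{EQ:def-cR} and to \eqref{EQ:cR}. The substitution maps $\cW$ to $\bar\cW$ and commutes with $\partial/\partial x$, so
\[
\cP(\vt;x,y)=\frac{x}{1-\bar\cW(\vt;x)-y\,\bar\cW'(\vt;x)}.
\]
I expect no real obstacle here. The only point needing care is that the substitution is applied to identities of formal power series in $x,y,z$ whose coefficients are polynomials in $\vt$. The reciprocal in \eqref{EQ:barW_n} is well defined because the series has constant term $1$, so applying the substitution is legitimate. Verifying the exponent bookkeeping in the first step is the main, if routine, check.
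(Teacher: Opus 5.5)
Your proposal is correct and takes the same route as the paper. The paper obtains this theorem from Theorem~\ref{THM:bijections-run-pattern} and Theorem~\ref{THM:dis-run} via the substitution $t_j\mapsto t_1^jt_2^{j-1}\cdots t_j$; you have simply written out the exponent bookkeeping from \eqref{EQ:p-DR} and checked that this ring homomorphism commutes with the recurrence, with the inversions defining $\bar W_n(\vt)$ and $\cP(\vt;x,y)$, and with $\partial/\partial x$.
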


The equations for $\cp_s(t;\oQ_M)$ have a more explicit form, since the substitution makes the ordinary generating function of $\tilde{W}_{s,n}(t)$ a rational function. Let $\tilde{W}_{s,n}(t)$ and $\tilde{\cW}_s(t,x)$ be the functions obtained from $\bar{W}_n(\mathbf{t})$ and $\bar{\mathcal{W}}(\mathbf{t};x)$, respectively, by setting $t_s=t$ and $t_j=1$ for all $j\neq s$. Then by definition~\eqref{EQ:barW_n}, we have
\begin{align}
\sum_{n=1}^{\infty} \tilde{W}_{s,n}(t) z^n
&=1- \left( \sum_{n=0}^{s-1}z^n + \sum_{n=s}^{\infty} t^{n-s+1} z^n \right)^{-1}
=1- \left( \frac{1-z^s}{1-z} + \frac{1}{t^{s-1}}\left(\frac{1}{1-tz}-\sum_{n=0}^{s-1}(tz)^n\right) \right)^{-1}\nonumber\\
&=1-\frac{(1-tz)(1-z)}{(1-tz)(1-z^s)+\frac{1-z}{t^{s-1}}\left(1-\sum_{n=0}^{s-1}(tz)^n+\sum_{n=1}^{s}(tz)^n\right)}
\nonumber\\
&=\frac{(t-1)z^s-tz^2+z}{1-tz+(t-1)z^s}.\label{EQ:barW-gf}
\end{align}
Therefore, the exponential generating function $\tilde{\cW}_{s}(t,x)$ can be expressed as 
\begin{align*}
\tilde{\cW}_{s}(t,x)=\frac{(t-1)x^s-tx^2+x}{1-tx+(t-1)x^s}\odot_x e^x,
\end{align*}
where the notation $\odot_x$ denotes the Hadamard product with respect to $x$. Recall that the \textit{Hadamard product} of two power series $f(x)=\sum_{n=0}^{\infty}f_nx^n$ and $g(x)=\sum_{n=0}^{\infty}g_nx^n$ with respect to $x$ is defined as $f(x)\odot_x g(x):=\sum_{n=0}^{\infty}f_ng_nx^n$.

The ordinary generating function~\eqref{EQ:barW-gf} also yields the recurrence relation and an explicit expression for the functions $\tilde{W}_{s,n}(t)$. If $s=1$, then $\tilde{W}_{1,1}(t)=t$ and $\tilde{W}_{1,n}(t)=0$ for all $n\geq2$. Now assume that $s>1$. Then we have
\begin{align}\label{EQ:barW-rr}
\tilde{W}_{s,n}(t)=t\tilde{W}_{s,n-1}(t)-(t-1)\tilde{W}_{s,n-s}(t)
\end{align}
for $n\geq s+1$, with initial conditions $\tilde{W}_{s,1}(t)=1$, $\tilde{W}_{s,n}(t)=0$ for $2\leq n \leq s-1$, and $\tilde{W}_{s,s}(t)=t-1$. Moreover, expanding the right-hand side of~\eqref{EQ:barW-gf} as a power series in $z$ yields
\begin{align*}
&\,\tilde{W}_{s,n}(t)=\sum_{m=1}^{\lfloor \frac{n}{s}\rfloor}(-1)^{m-1}\binom{n-sm+m-1}{m-1}t^{n-sm}(t-1)^m\\
&\,-\sum_{m=0}^{\lfloor \frac{n-2}{s}\rfloor}(-1)^{m}\binom{n-sm+m-2}{m}t^{n-sm-1}(t-1)^m
+\sum_{m=0}^{\lfloor \frac{n-1}{s}\rfloor}(-1)^{m}\binom{n-sm+m-1}{m}t^{n-sm-1}(t-1)^m
\end{align*}
for $n\geq2$. Combining the last two sums via the identity $\binom{n}{m}-\binom{n-1}{m}=\binom{n-1}{m-1}$, we obtain
\begin{align*}
\tilde{W}_{s,n}(t)=&\,\sum_{m=1}^{\lfloor \frac{n}{s}\rfloor}(-1)^{m-1}\binom{n-sm+m-1}{m-1}t^{n-sm}(t-1)^m\\
&\,+\sum_{m=1}^{\lfloor \frac{n-2}{s}\rfloor}(-1)^{m}\binom{n-sm+m-2}{m-1}t^{n-sm-1}(t-1)^m
+ \left(\lfloor \frac{n-1}{s}\rfloor-\lfloor \frac{n-2}{s}\rfloor-1\right)(1-t)^{\lfloor \frac{n-1}{s}\rfloor}.
\end{align*}

In terms of the functions $\tilde{W}_{s,n}(t)$ and $\tilde{\cW}_s(t,x)$, Theorem~\ref{THM:enum-joint-pattern} yields the following characterization of the functions $\cp_s(t;n,k)$.

\begin{thm}\label{THM:enum-p-dis}
	Let $M=\{1^{k_1},2^{k_2},\ldots,n^{k_n}\}$ be a multiset with $n\geq 1$ and $k_1,k_2,\ldots,k_n\in\mathbb P$. Let 
	$K=k_1+k_2+\cdots +k_n$ and $s$ be a positive integer. Then we have
	\[
	\cp_s(t;\oQ_M)=\cp_s(t;n,K-n+1).
	\]
	Furthermore, $\cp_s(t;n,k)$ satisfies the recurrence relations
	\begin{align}\label{EQ:cps-rr}
	\cp_s(t;n,k)=
	\sum_{m=1}^{n}\tilde{W}_{s,m}(t)\left(\binom{n-1}{m}\cp_s(t;n-m,k)+\binom{n-1}{m-1}\cp_s(t;n-m+1,k-1)\right) 
	\end{align}
	for $n\geq s$ and $k\geq1$, and $\cp_s(t;n,0)=\cp_s(t;n-1,1)$ for $n\geq2$.
	The initial conditions are $\cp_s(t;n,k)=(n+k-1)!/k!$ for $1\leq n\leq s-1$ and $k\geq0$, and $\cp_s(t;n,k)=0$ whenever $n\leq0$ or $k<0$.
	The exponential generating function of $\cp_s(t;n,k)$ is given by
	\begin{align*}
	\cP_s(t,x,y)=\frac{x}{1-\tilde{\cW}_s(t,x)-y\cdot \tilde{\cW}_s'(t,x)},
	\end{align*}
	where the prime denotes differentiation with respect to $x$.
\end{thm}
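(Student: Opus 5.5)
The plan is to specialize Theorem~\ref{THM:enum-joint-pattern} at $t_s=t$ and $t_j=1$ for $j\neq s$. The statement $\cp_s(t;\oQ_M)=\cp_s(t;n,K-n+1)$ follows from the identity $\cp(\vt;\oQ_M)=\cp(\vt;n,K-n+1)$ under this substitution. By construction, $\bar W_m(\vt)$ specializes to $\tilde W_{s,m}(t)$ and $\bar\cW(\vt;x)$ to $\tilde\cW_s(t,x)$. The same specialization therefore turns the recurrence of Theorem~\ref{THM:enum-joint-pattern} into \eqref{EQ:cps-rr} and the generating function into the stated formula for $\cP_s(t,x,y)$. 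The relation $\cp_s(t;n,0)=\cp_s(t;n-1,1)$ for $n\ge2$ and the vanishing conditions for $n\le0$ or $k<0$ also carry over directly.

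The only points needing a small argument are the restriction of \eqref{EQ:cps-rr} to $n\ge s$ and the new initial conditions for $1\le n\le s-1$.

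\begin{itemize}
\item \textbf{Initial conditions.} If $n\le s-1$, a permutation of $\{1^k,2,\ldots,n\}$ has decreasing runs of length at most $n<s$, so it contains no occurrence of $\underline{s\cdots21}$. Hence $\cp_s(t;n,k)=|S_{\{1^k,2,\ldots,n\}}|=(n+k-1)!/k!$.
\item \textbf{The recurrence for $n\ge s$.} The specialized recurrence actually holds for every $n\ge1$ and $k\ge1$. For $n\ge s$ it is simply the specialization, so nothing more is needed.
\item \textbf{Consistency for $n<s$.} The paper prefers to take the values for $n<s$ as initial data. One can check they are consistent with the recurrence, since for $m<s$ only $\tilde W_{s,1}=1$ is nonzero.
\end{itemize}

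The main obstacle is merely bookkeeping: confirming that the substitution commutes with the defining expansion \eqref{EQ:barW_n}, so that $\tilde W_{s,n}$ is exactly given by \eqref{EQ:barW-gf}. This holds because the specialization is a ring homomorphism applied coefficientwise to formal power series in $z$ and $x$.
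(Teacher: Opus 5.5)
Your proposal is correct and follows the paper's route. The paper obtains this theorem by specializing Theorem~\ref{THM:enum-joint-pattern} at $t_s=t$ and $t_j=1$ for $j\neq s$, under which $\bar W_m\mapsto\tilde W_{s,m}$ and $\bar\cW\mapsto\tilde\cW_s$. Your explicit justification of the initial values $(n+k-1)!/k!$ for $1\le n\le s-1$ is a step the paper leaves implicit: decreasing runs are strictly decreasing, so they have length at most $n<s$.
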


This theorem implies several classical enumerative results. For example, setting $k=1$, we obtain the distribution functions for classical permutations with the recurrence relation
\begin{align}
\cp_s(t;n,1)=
n\cp_s(t;n-1,1)+\sum_{m=s}^{n-1}\tilde{W}_{s,m}(t)\binom{n}{m}\cp_s(t;n-m,1)+\tilde{W}_{s,n}(t)\label{EQ:pstn1}
\end{align}
for $n\geq s$. In particular, when $n=s$, we have
\begin{align}\label{EQ:psts1}
\cp_s(t;s,1)=s\cp_s(t;s-1,1)+\tilde{W}_{s,s}(t)=t+s!-1,
\end{align}
which agrees with the fact that among classical permutations of length $s$, exactly one contains the pattern $\underline{s\cdots 21}$. In addition, Theorem~\ref{THM:enum-p-dis} also prepares us for the proof of Corollary~\ref{COR:most-p}.

\begin{proof}[Proof of Corollary~\ref{COR:most-p}]
	It suffices to show that, for any $k\geq1$ and $n\geq s$, the polynomial $\cp_s(t;n,k)$ has degree $n-s+1$ with leading coefficient $k^{n-1}$ for $s=2$, and $k$ for $s>2$. The proof then follows by induction on $n\geq1$.
	
	For $n=s$, the recurrence relation~\eqref{EQ:cps-rr} reduces to
	\[\cp_s(t;s,k)=(s-1)\cp_s(t;s-1,k)+\cp_s(t;s,k-1)+(t-1).\]
	Therefore, the recursive computation with the initial condition~\eqref{EQ:psts1} yields that the leading term of $\cp_s(t;s,k)$ is exactly $kt$.
	
	For $n>s$, assume that the result holds for smaller $n$. Fixing $n$, we then proceed by induction on $k\geq1$. Note that for $n\geq s$, the leading term of $\tilde{W}_{s,n}(t)$ is $t^{n-s+1}$ by Equation~\eqref{EQ:barW-rr}. For $k=1$, it is straightforward to see that the leading term of $\cp_s(t;n,1)$ is $t^{n-s+1}$, either from the combinatorial interpretation or from the recurrence relation~\eqref{EQ:pstn1}. Then for $k>1$, assume that the result holds for smaller $k$. If $s>2$, the recurrence relation~\eqref{EQ:cps-rr} implies that the leading term of $\cp_s(t;n,k)$ is contributed by 
	\[\binom{n-1}{0}\tilde{W}_{s,1}(t)\cp_s(t;n,k-1)+\binom{n-1}{n-1}\tilde{W}_{s,n}(t)\cp_s(t;1,k-1),\]
	and is therefore $(k-1+1)t^{n-s+1}$. If $s=2$, then the leading term of $\cp_2(t;n,k)$ is contributed by
	\[\sum_{m=1}^{n}\tilde{W}_{2,m}(t)\binom{n-1}{m-1}\cp_2(t;n-m+1,k-1),\]
	and is thus
	$\sum_{m=1}^{n}t^{m-2+1}\binom{n-1}{m-1}(k-1)^{n-m}t^{n-m+1-2+1}
	=k^{n-1}t^{n-1}$.
\end{proof}

We conclude this section by presenting the enumerative results on pattern avoidance, a topic that has attracted considerable attention and been extensively investigated. In~\cite[Example 3, p.~51]{Gessel1977}, Gessel also examined this problem as an meaningful application of the run theorem. Setting $t=0$, we observe that the only indices $(s,n)$ for which $\tilde{W}_{s,n}(0)$ is nonzero are those satisfying either $s \mid n$, in which case $\tilde{W}_{s,n}(0) = -1$, or $s \mid (n-1)$, in which case $\tilde{W}_{s,n}(0) = 1$. Theorem~\ref{THM:enum-p-dis} then specializes to the following form.

\begin{thm}\label{THM:avoid-p}
	Let $M=\{1^{k_1},2^{k_2},\ldots,n^{k_n}\}$ be a multiset with $n\geq 1$ and $k_1,k_2,\ldots,k_n\in\mathbb P$. Let 
	$K=k_1+k_2+\cdots +k_n$ and $s\geq2$ be a positive integer. Then the number of the quasi-Stirling permutations in $\oQ_M$ avoiding pattern $\underline{s\cdots 21}$ is $\cp_s(0;n,K-n+1)$. Furthermore, the sequence $\cp_s(0;n,k)$ satisfies the recurrence relations
	\begin{align*}
	\cp_s(0;n,k)=&\,\sum_{m=1}^{\lfloor 
		\frac{n}{s}\rfloor}\left(-\left(\binom{n-1}{sm}\cp_s(0;n-sm,k)+\binom{n-1}{sm-1}\cp_s(0;n-sm+1,k-1)\right)\right.\\
	&\,\left.+\left(\binom{n-1}{sm+1}\cp_s(0;n-sm-1,k)+\binom{n-1}{sm}\cp_s(0;n-sm,k-1)\right)\right)\\
	&\,+\left((n-1)\cp_s(0;n-1,k)+\cp_s(0;n,k-1)\right)
	\end{align*}
	for $n\geq s$ and $k\geq1$, and $\cp_s(0;n,0)=\cp_s(0;n-1,1)$ for $n\geq2$.
	The initial conditions are $\cp_s(0;n,k)=(n+k-1)!/k!$ for $1\leq n\leq s-1$ and $k\geq0$, and $\cp_s(0;n,k)=0$ whenever $n\leq0$ or $k<0$.	
	Let $\hat{\cW}_{s}(x)$ be defined as $\hat{\cW}_{s}(x)=(-x^s+x)/(1-x^s)\odot_x e^x$.
	Then the exponential generating function of $\cp_s(0;n,k)$ is given by
	\begin{align*}
	\cP_s(0,x,y)=\frac{x}{1-\hat{\cW}_{s}(x)-y\cdot \hat{\cW}_{s}'(x)},
	\end{align*}
	where the prime denotes differentiation with respect to $x$.
\end{thm}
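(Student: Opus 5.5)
The plan is to obtain Theorem~\ref{THM:avoid-p} as the specialization $t=0$ of Theorem~\ref{THM:enum-p-dis}. The first claim is almost immediate. A permutation avoids $\underline{s\cdots21}$ exactly when it has zero occurrences of that pattern. Setting $t=0$ in $\cp_s(t;\oQ_M)$ therefore counts the avoiders in $\oQ_M$. Theorem~\ref{THM:enum-p-dis} gives $\cp_s(t;\oQ_M)=\cp_s(t;n,K-n+1)$ as polynomials in $t$, so evaluating at $t=0$ yields the stated count $\cp_s(0;n,K-n+1)$.

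For the recurrence, I would first determine $\tilde{W}_{s,n}(0)$ for $s\ge2$. Setting $t=0$ in \eqref{EQ:barW-gf} gives
\[
\sum_{n\ge1}\tilde{W}_{s,n}(0)z^n=\frac{z-z^s}{1-z^s}=\sum_{m\ge0}z^{sm+1}-\sum_{m\ge1}z^{sm}.
\]
Hence $\tilde{W}_{s,n}(0)=1$ when $n\equiv1\pmod s$, $\tilde{W}_{s,n}(0)=-1$ when $s\mid n$, and $\tilde{W}_{s,n}(0)=0$ otherwise. These two residue classes are disjoint because $s\ge2$.

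Substituting these values into \eqref{EQ:cps-rr}, the term $m=1$ with weight $+1$ gives $(n-1)\cp_s(0;n-1,k)+\cp_s(0;n,k-1)$. For $m'\ge1$, the term $m=sm'$ carries weight $-1$ and the term $m=sm'+1$ carries weight $+1$. Grouping these by $m'$ gives exactly the displayed sum over $1\le m'\le\lfloor n/s\rfloor$. When $sm'+1>n$, the binomial coefficients $\binom{n-1}{sm'+1}$ and $\binom{n-1}{sm'}$ vanish or the corresponding arguments fall outside the range of nonzero values, so this upper limit is harmless. The relation $\cp_s(0;n,0)=\cp_s(0;n-1,1)$, the initial values $(n+k-1)!/k!$ for $n\le s-1$, and the vanishing conditions are inherited directly from Theorem~\ref{THM:enum-p-dis}.

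For the generating function, I would set $t=0$ in $\cP_s(t,x,y)=x/(1-\tilde{\cW}_s(t,x)-y\,\tilde{\cW}_s'(t,x))$. Specializing $t=0$ commutes with taking coefficients and with $\partial_x$, and the Hadamard product is taken coefficientwise, so $\tilde{\cW}_s(0,x)=\frac{x-x^s}{1-x^s}\odot_x e^x=\hat{\cW}_s(x)$. I do not expect a real obstacle. The only points that need care are the bookkeeping of the boundary indices in the regrouped sum and the check that the substitution $t=0$ is legitimate coefficientwise, which holds because everything involved is polynomial in $t$ at each fixed power of $x$ and $y$.
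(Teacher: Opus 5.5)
Your proposal is correct and follows the same route as the paper: it specializes Theorem~\ref{THM:enum-p-dis} at $t=0$, using that $\tilde{W}_{s,n}(0)$ equals $1$ when $s\mid(n-1)$, equals $-1$ when $s\mid n$, and is $0$ otherwise. Your check that the extra term $m=sm'+1=n+1$ (when $s\mid n$) vanishes, and your remark that setting $t=0$ is legitimate coefficientwise in the generating function, supply details the paper leaves implicit.
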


\begin{rem}
	Yang and Zeilberger~\cite{Yang2020} also presented several elegant expressions for the distribution of increasing consecutive patterns in multipermutations over general multisets. They focused on the patterns of length at least two. Although they employed the Goulden--Jackson cluster method, which differs from that used in Gessel's work~\cite{Gessel1977}, their resulting expressions are structurally identical to those obtained from the run theorem, with the weight function $\tilde{W}^*_{s,n}(t)$ satisfying the recurrence
	\begin{align*}
	\tilde{W}^*_{s,n}(t):=
	(t-1)\sum_{m=1}^{s-1}\tilde{W}^*_{s,n-m}(t) 
	\end{align*}
	for $n\geq s+1$, together with the same initial values as those of $\tilde{W}_{s,n}(t)$. An inductive argument shows that $\tilde{W}^*_{s,n}(t)=\tilde{W}_{s,n}(t)$ for all $n\ge 1$, since
	\begin{align*}
	\tilde{W}_{s,n}(t)&=t\tilde{W}_{s,n-1}(t)-(t-1)\tilde{W}_{s,n-s}(t)
	=t\tilde{W}^*_{s,n-1}(t)-(t-1)\tilde{W}^*_{s,n-s}(t)\\
	&=(t-1)\tilde{W}^*_{s,n-1}(t)+(t-1)\sum_{m=1}^{s-1}\tilde{W}^*_{s,n-m-1}(t)-(t-1)\tilde{W}^*_{s,n-s}(t)\\
	&=(t-1)\sum_{m=1}^{s-1}\tilde{W}^*_{s,n-m}(t)=\tilde{W}^*_{s,n}(t).
	\end{align*}
	Hence, Theorems~\ref{THM:enum-p-dis} and~\ref{THM:avoid-p} can also be obtained from Theorems 3 and 1 in~\cite{Yang2020} respectively by an argument analogous to those in Propositions~\ref{PROP:rec-dr} and~\ref{PROP:cR}.
\end{rem}

Taking $k=1$, we obtain the number of classical permutations avoiding the decreasing consecutive pattern of length $s$, which satisfies the recurrence
\begin{align*}
\cp_s(0;n,1)
&= n\cp_s(0;n-1,1)
-\sum_{m=1}^{\lceil n/s \rceil-1} \binom{n}{sm}\cp_s(0;n-sm,1)
-\left(\left\lfloor \frac{n}{s} \right\rfloor - \left\lceil \frac{n}{s} \right\rceil \right)\times1\\
&\quad +\sum_{m=1}^{\lceil (n-1)/s \rceil-1} \binom{n}{sm+1}\cp_s(0;n-sm-1,1)
+\left(\left\lfloor \frac{n-1}{s} \right\rfloor - \left\lceil \frac{n-1}{s} \right\rceil \right)\times1
\end{align*}
for $n\ge s$. This recurrence originally goes back to David and Barton~\cite{David1962}. The corresponding sequences for $3\le s\le 9$ can be found in the OEIS~\cite{oeis} under the identifiers A001212, A117158, A177523, A177533, A177553, A230051 and A230231, respectively.




\section*{Acknowledgments} The author is grateful to Shaoshi Chen and Zhicong Lin for their valuable suggestions.

\section*{Declaration of AI Assistance}

During the preparation of this manuscript, the author utilized an AI-assisted tool for language polishing and grammar checking to improve the readability and clarity of the text. All core reasoning and key conclusions were independently completed by the author. The use of AI did not involve the substantive generation or creative contribution to the research content. The author takes full academic responsibility for the entire manuscript.

\bibliographystyle{plain}

\bibliography{qStirling}

@book {Gessel1977,
    AUTHOR = {Gessel, I. M.},
     TITLE = {Generating functions and enumeration of sequences},
PUBLISHER = {PhD thesis, Massachusetts Institute of Technology}, 
      YEAR = {1977},
     PAGES = {(no paging)},
   MRCLASS = {Thesis},
  MRNUMBER = {2940769},
       URL =
              {http://gateway.proquest.com/openurl?url_ver=Z39.88-2004&rft_val_fmt=info:ofi/fmt:kev:mtx:dissertation&res_dat=xri:pqdiss&rft_dat=xri:pqdiss:0322960},
}

@article {Lin2021,
    AUTHOR = {Lin, Z. and Ma, J. and Zhang, P. B.},
     TITLE = {Statistics on multipermutations and partial
              {$\gamma$}-positivity},
   JOURNAL = {J. Combin. Theory Ser. A},
  FJOURNAL = {Journal of Combinatorial Theory. Series A},
    VOLUME = {183},
      YEAR = {2021},
     PAGES = {Paper No. 105488, 24},
      ISSN = {0097-3165,1096-0899},
   MRCLASS = {05A05 (05A15 05A19)},
  MRNUMBER = {4268724},
MRREVIEWER = {Eric\ S.\ Egge},
       DOI = {10.1016/j.jcta.2021.105488},
       URL = {https://doi.org/10.1016/j.jcta.2021.105488},
}

@article {Bona2008,
    AUTHOR = {B\'{o}na, M.},
     TITLE = {Real zeros and normal distribution for statistics on
              {S}tirling permutations defined by {G}essel and {S}tanley},
   JOURNAL = {SIAM J. Discrete Math.},
  FJOURNAL = {SIAM Journal on Discrete Mathematics},
    VOLUME = {23},
      YEAR = {2008/09},
    NUMBER = {1},
     PAGES = {401--406},
      ISSN = {0895-4801,1095-7146},
   MRCLASS = {05A05 (05A15 05A16)},
  MRNUMBER = {2476838},
MRREVIEWER = {Daniel\ E.\ Warren},
       DOI = {10.1137/070702254},
       URL = {https://doi.org/10.1137/070702254},
}

@incollection {Janson2008,
    AUTHOR = {Janson, S.},
     TITLE = {Plane recursive trees, {S}tirling permutations and an urn
              model},
 BOOKTITLE = {Fifth {C}olloquium on {M}athematics and {C}omputer {S}cience},
    SERIES = {Discrete Math. Theor. Comput. Sci. Proc., AI},
     PAGES = {541--547},
 PUBLISHER = {Assoc. Discrete Math. Theor. Comput. Sci., Nancy},
      YEAR = {2008},
   MRCLASS = {60C05 (05A05 05C05 68R10)},
  MRNUMBER = {2508813},
MRREVIEWER = {Hsien-Kuei\ Hwang},
}

@article {Gessel1978,
    AUTHOR = {Gessel, I. and Stanley, R. P.},
     TITLE = {Stirling polynomials},
   JOURNAL = {J. Combinatorial Theory Ser. A},
  FJOURNAL = {Journal of Combinatorial Theory. Series A},
    VOLUME = {24},
      YEAR = {1978},
    NUMBER = {1},
     PAGES = {24--33},
      ISSN = {0097-3165},
   MRCLASS = {05A15},
  MRNUMBER = {462961},
MRREVIEWER = {Stephen\ Tanny},
       DOI = {10.1016/0097-3165(78)90042-0},
       URL = {https://doi.org/10.1016/0097-3165(78)90042-0},
}

@article{oeis,
  title={The on-line encyclopedia of integer sequences},
  author={Sloane, Neil J. A. and others},
  journal={Published electronically at \url{https://oeis. org}},
  year={2018}
}

@book {David1962,
    AUTHOR = {David, F. N. and Barton, D. E.},
     TITLE = {Combinatorial chance},
 PUBLISHER = {Hafner Publishing Co., New York},
      YEAR = {1962},
     PAGES = {ix+356},
   MRCLASS = {62.00 (62.10)},
  MRNUMBER = {155371},
MRREVIEWER = {Z.\ W.\ Birnbaum},
}

@article {Archer2019-1,
    AUTHOR = {Archer, K. and Gregory, A. and Pennington, B. and
              Slayden, S.},
     TITLE = {Pattern restricted quasi-{S}tirling permutations},
   JOURNAL = {Australas. J. Combin.},
  FJOURNAL = {The Australasian Journal of Combinatorics},
    VOLUME = {74},
      YEAR = {2019},
     PAGES = {389--407},
      ISSN = {1034-4942,2202-3518},
   MRCLASS = {05A05 (05A15)},
  MRNUMBER = {3969738},
MRREVIEWER = {Yan\ Zhuang},
}

@article {Brenti1989-2,
    AUTHOR = {Brenti, F.},
     TITLE = {Unimodal, log-concave and {P}\'{o}lya frequency sequences in
              combinatorics},
   JOURNAL = {Mem. Amer. Math. Soc.},
  FJOURNAL = {Memoirs of the American Mathematical Society},
    VOLUME = {81},
      YEAR = {1989},
    NUMBER = {413},
     PAGES = {viii+106},
      ISSN = {0065-9266,1947-6221},
   MRCLASS = {05A15 (05A10 05A20)},
  MRNUMBER = {963833},
MRREVIEWER = {Ira\ Gessel},
       DOI = {10.1090/memo/0413},
       URL = {https://doi.org/10.1090/memo/0413},
}

@article {Dzhumadil2014-5,
    AUTHOR = {Dzhumadil'daev, A. and Yeliussizov, D.},
     TITLE = {Stirling permutations on multisets},
   JOURNAL = {European J. Combin.},
  FJOURNAL = {European Journal of Combinatorics},
    VOLUME = {36},
      YEAR = {2014},
     PAGES = {377--392},
      ISSN = {0195-6698,1095-9971},
   MRCLASS = {05A05 (05A15)},
  MRNUMBER = {3131903},
MRREVIEWER = {Vincent\ Vatter},
       DOI = {10.1016/j.ejc.2013.08.002},
       URL = {https://doi.org/10.1016/j.ejc.2013.08.002},
}

@article {Brenti1998-3,
    AUTHOR = {Brenti, F.},
     TITLE = {Hilbert polynomials in combinatorics},
   JOURNAL = {J. Algebraic Combin.},
  FJOURNAL = {Journal of Algebraic Combinatorics. An International Journal},
    VOLUME = {7},
      YEAR = {1998},
    NUMBER = {2},
     PAGES = {127--156},
      ISSN = {0925-9899,1572-9192},
   MRCLASS = {13D40 (05A15 16W99)},
  MRNUMBER = {1609885},
MRREVIEWER = {Ralf\ Fr\"{o}berg},
       DOI = {10.1023/A:1008656320759},
       URL = {https://doi.org/10.1023/A:1008656320759},
}

@book {StanleyBook2,
    AUTHOR = {Stanley, R. P.},
     TITLE = {Enumerative combinatorics. {V}ol 2},
    SERIES = {Cambridge Studies in Advanced Mathematics},
 PUBLISHER = {Cambridge University Press, Cambridge},
      YEAR = {1999},
      VOLUME = {62},
     PAGES = {xiv+626},
      ISBN = {978-1-107-60262-5},
   MRCLASS = {05-02 (05A15 06-02)},
  MRNUMBER = {2868112},
}

@article {Yang2020,
    AUTHOR = {Yang, M. and Zeilberger, D.},
     TITLE = {Increasing consecutive patterns in words},
   JOURNAL = {J. Algebraic Combin.},
  FJOURNAL = {Journal of Algebraic Combinatorics. An International Journal},
    VOLUME = {51},
      YEAR = {2020},
    NUMBER = {1},
     PAGES = {89--101},
      ISSN = {0925-9899,1572-9192},
   MRCLASS = {05A15 (05A05)},
  MRNUMBER = {4064613},
MRREVIEWER = {Donatella\ Merlini},
       DOI = {10.1007/s10801-018-0868-5},
       URL = {https://doi.org/10.1007/s10801-018-0868-5},
}

@book {Goulden1983,
    AUTHOR = {Goulden, I. P. and Jackson, D. M.},
     TITLE = {Combinatorial enumeration},
    SERIES = {Wiley-Interscience Series in Discrete Mathematics},
 PUBLISHER = {John Wiley \& Sons, Inc., New York},
      YEAR = {1983},
     PAGES = {xxiv+569},
      ISBN = {0-471-86654-7},
   MRCLASS = {05-02 (05A15)},
  MRNUMBER = {702512},
MRREVIEWER = {R.\ W.\ Robinson},

}

@article {GesselZhuang2014,
    AUTHOR = {Gessel, I. M. and Zhuang, Y.},
     TITLE = {Counting permutations by alternating descents},
   JOURNAL = {Electron. J. Combin.},
  FJOURNAL = {Electronic Journal of Combinatorics},
    VOLUME = {21},
      YEAR = {2014},
    NUMBER = {4},
     PAGES = {Paper 4.23, 21},
      ISSN = {1077-8926},
   MRCLASS = {05A15 (05A05)},
  MRNUMBER = {3292260},
MRREVIEWER = {Shi-Mei\ Ma},
       DOI = {10.37236/4624},
       URL = {https://doi.org/10.37236/4624},
}

@article {Jackson1977,
    AUTHOR = {Jackson, D. M. and Aleliunas, R.},
     TITLE = {Decomposition based generating functions for sequences},
   JOURNAL = {Canadian J. Math.},
  FJOURNAL = {Canadian Journal of Mathematics. Journal Canadien de
              Math\'{e}matiques},
    VOLUME = {29},
      YEAR = {1977},
    NUMBER = {5},
     PAGES = {971--1009},
      ISSN = {0008-414X,1496-4279},
   MRCLASS = {05A15},
  MRNUMBER = {450080},
MRREVIEWER = {Michael\ Henle},
       DOI = {10.4153/CJM-1977-098-3},
       URL = {https://doi.org/10.4153/CJM-1977-098-3},
}

@article {YanZhu2022,
    AUTHOR = {Yan, S. H. F. and Zhu, X.},
     TITLE = {Quasi-{S}tirling polynomials on multisets},
   JOURNAL = {Adv. in Appl. Math.},
  FJOURNAL = {Advances in Applied Mathematics},
    VOLUME = {141},
      YEAR = {2022},
     PAGES = {Paper No. 102415, 14},
      ISSN = {0196-8858,1090-2074},
   MRCLASS = {05A05 (05C30)},
  MRNUMBER = {4470615},
MRREVIEWER = {Mikl\'{o}s\ B\'{o}na},
       DOI = {10.1016/j.aam.2022.102415},
       URL = {https://doi.org/10.1016/j.aam.2022.102415},
}

@article {YanYangHuangZhu2022,
    AUTHOR = {Yan, S. H. F. and Yang, L. and Huang, Y. and Zhu,
              X.},
     TITLE = {Statistics on quasi-{S}tirling permutations of multisets},
   JOURNAL = {J. Algebraic Combin.},
  FJOURNAL = {Journal of Algebraic Combinatorics. An International Journal},
    VOLUME = {55},
      YEAR = {2022},
    NUMBER = {4},
     PAGES = {1265--1277},
      ISSN = {0925-9899,1572-9192},
   MRCLASS = {05A15 (05A05)},
  MRNUMBER = {4423530},
       DOI = {10.1007/s10801-021-01093-z},
       URL = {https://doi.org/10.1007/s10801-021-01093-z},
}

@article {Elizalde2021,
    AUTHOR = {Elizalde, S.},
     TITLE = {Descents on quasi-{S}tirling permutations},
   JOURNAL = {J. Combin. Theory Ser. A},
  FJOURNAL = {Journal of Combinatorial Theory. Series A},
    VOLUME = {180},
      YEAR = {2021},
     PAGES = {Paper No. 105429, 35},
      ISSN = {0097-3165,1096-0899},
   MRCLASS = {05A05 (05A15 05A19)},
  MRNUMBER = {4211010},
MRREVIEWER = {Istv\'{a}n\ Mez\H{o}},
       DOI = {10.1016/j.jcta.2021.105429},
       URL = {https://doi.org/10.1016/j.jcta.2021.105429},
}

@book{Kitaev2011,
    AUTHOR = {Kitaev, S.},
     TITLE = {Patterns in permutations and words},
    SERIES = {Monographs in Theoretical Computer Science. An EATCS Series},
 PUBLISHER = {Springer, Heidelberg},
      YEAR = {2011},
     PAGES = {xxii+494},
      ISBN = {978-3-642-17332-5; 978-3-642-17333-2},
   MRCLASS = {05-02 (05A05 05A15 68-02 68R05 68R15 68W32)},
  MRNUMBER = {3012380},
MRREVIEWER = {Sergi\ Elizalde},
       DOI = {10.1007/978-3-642-17333-2},
       URL = {https://doi.org/10.1007/978-3-642-17333-2},
}

\end{document}